\documentclass[12pt,a4paper]{article} 

\usepackage{amsmath} 
\usepackage{amssymb} 
\usepackage{graphicx} 
\usepackage[utf8]{inputenc} 
\usepackage{hyperref} 
\usepackage[square,numbers,sort&compress]{natbib} 
\usepackage{blindtext} 

\usepackage{amsthm}
\usepackage{latexsym}
\usepackage{mathtools}

\renewcommand{\equation}

\newtheorem{lem}{Lemma}[section]
\newtheorem{prop}{Proposition}[section]
\newtheorem{thm}{Theorem}[section]
\newtheorem{cor}{Corollary}[section]
\newtheorem{fact}{Fact}[section]

\theoremstyle{definition}

\newtheorem{defini}{Definition}[section]
\newtheorem{rmk}{Remark}[section]
\newtheorem{example}{Example}[section]

\begin{document}

\title{An example of biquandle and an invariant of long virtual knots defined through it}
\author{Nozomu Sekino}
\date{}

\maketitle

\begin{abstract}
We introduce an example of biquandle and define an invariant of long virtual knots through it. 
As an application, we give a necessary condition for two long virtual knots commuting. 
\end{abstract}

\section{Introduction}
Biquandles are algebraic structures with axioms derived from the Reidemeister moves for oriented knots and links introduced in \cite{fenn}. 
They have been used to define invariants of oriented knots and links in so many works. 
Virtual knots were introduced by Kauffmann \cite{kauffman} as a generalization of classical knots. 
While virtual knots share many properties with classical knots, there are also many differences between them. 
For example, the resultant of the connected sum operation of fixed two virtual knots may not be unique in contrast to classical knots. 
For the case of long virtual knots, which is counterparts of classical long knots, products of two long virtual knots can be defined. 
However, this product is not commutative in general in contrast to classical long knots again. 

In this paper, we introduce an example of biquandle and define an invariant of long virtual knots through it. 
As an application, we give a necessary condition for two long virtual knots commuting. 
The rest of this paper is organized as follows. 
In Section~\ref{section2}, we define a ring and give it a biquandle structure. 
In Section~\ref{section3}, we recall the definition of long virtual knots and their biquandle colorings. 
In Section~\ref{section4}, we define an invariant of long virtual knots using our biquandle coloring. 
In Section~\ref{section5}, using the invariant, we give a necessary condition for two long virtual knots commuting.


\section{Definition and biquandle structure of $X$}\label{section2}
In this section, we give a formal definition of our ring $X$ (in subsection~\ref{defX}) and give a biquandle structure on it (in subsection~\ref{biqdlX}). 
Before that, we state an informal definition of $X$ for our use: 
The ring $X$ is the polynomial ring with integer coefficients which has three non-commutative variables $s$, $u$ and $t$. 
For these variables, the following holds. $(1)$ $s$ is invertible, and $u$ and $t$ are non-invertible. $(2)$ elements of $X$ are formal power series about $u$ and $t$. (3) $us=su$ and $sts^{-1}=(1-u)t(1+t)^{-1}$.

\subsection{Definition of ring $X$}\label{defX}
Let $S=\{s, s^{-1}, u,t\}$ be a set containing four letters, and consider non-commutative free algebra $\mathbb{Z} \langle S \rangle$ on $S$. 
Give degrees to four variables  as ${\rm deg}(s)={\rm deg}(s^{-1})=0$ and ${\rm deg}(u)={\rm deg}(t)=1$. 
Take the degree completion and call the resultant $R=\mathbb{Z}\langle s, s^{-1}\rangle \langle \langle u,t\rangle \rangle $. 
For non-negative integer $k$, set $G^{k}$ to be the two sided ideal of $R$ generated by monomials whose degrees are grater than or equal to $k$. 
This gives a filtration $R=G^{0}\supset G^{1}\supset \dots$. 

\begin{defini}
Let $J$ be the closure of the two sided ideal of $R$ generated by $ss^{-1}-1$, $s^{-1}s-1$, $su-us$ and $sts^{-1}-(1-u)t(1+t)^{-1}$, where $(1-A)^{-1}$ denotes $1+A+A^{2}+\dots$ for $A\in G^{1}$. 
Then set $X$ to be the quotient $R/J$. 
We  use the same letters for the images of $s$, $s^{-1}$, $u$ and $t$. 
Moreover, we have a filtration $X=F^{0}\supset F^{1}\supset \dots$ by setting $F^{k}$ to be $(G^{k}+J)/J$. 
\end{defini}

\begin{rmk}
By relations on $X$ induced by $J$, we can deduce $s^{-1}ts=(1-u-t)^{-1}t$ as follows.
 \begin{align}
sts^{-1}=(1-u)t(1+t)^{-1} &\Longleftrightarrow sts^{-1}(1+t)s=(1-u)ts \notag \\
                                  &\Longleftrightarrow ts^{-1}(1+t)s=(1-u)s^{-1}ts \notag \\
                                  & \Longleftrightarrow t\left(1+s^{-1}ts\right)=(1-u)s^{-1}ts \notag \\
                                  & \Longleftrightarrow  t=(1-u-t)s^{-1}ts \notag \\
                                  & \Longleftrightarrow  (1-u-t)^{-1}t=s^{-1}ts \notag
 \end{align}

We list some relations frequently used: 
 \begin{align}
             s(u+t)s^{-1}&=u+sts^{-1}=u+(1-u)t(1+t)^{-1} \notag \\
                                  &=\left(u(1+t)+(1-u)t\right)(1+t)^{-1}=(u+t)(1+t)^{-1} \notag \\
         \notag \\
             s(1-u-t)s^{-1}&=1-s(u+t)s^{-1}=1-(u+t)(1+t)^{-1} \notag \\
                                &=\left( (1+t)-(u+t)\right)(1+t)^{-1} =(1-u)(1+t)^{-1} \notag 
 \end{align}

 \begin{align}
             s^{-1}(u+t)s&= u+s^{-1}ts=u+(1-u-t)^{-1}t =(1-u-t)^{-1}\left((1-u-t)u+t\right) \notag \\
                            &=(1-u-t)^{-1}(u+t-u^{2}-tu)=(1-u-t)^{-1}(u+t)(1-u) \notag \\
\notag \\
             s^{-1}(1-u-t)s&=1-s^{-1}(u+t)s=1-(1-u-t)^{-1}(u+t)(1-u) \notag \\
                                &=(1-u-t)^{-1} \left( (1-u-t)-(u+t)(1-u)\right) \notag \\
                                &=(1-u-t)^{-1}(1-2u-2t+u^{2}+tu) \notag
 \end{align}
\end{rmk}

Next, we give a``basis" of $X$ as $\mathbb{Z}-$module. 
\begin{defini}\label{def_base}
For a non-negative integer $k$, let $W_{k}(u,t)$ denote the set of sequences of letters of length $k$ consisting of $u$ and $t$, where the sequence of letters of length $0$ is the empty word. 
For example, $W_{2}(u,t)=\{ u^{2}, ut, tu, t^{2}\}$. 
And let $W(u,t)$ denote $\bigcup \limits_{k=0}^{\infty} W_{k}(u,t)$. 
\end{defini}

\begin{prop}\label{prop_base}
Every element of $X$ is represented by an infinite $\mathbb{Z}-$linear combination of $\mathcal{B}=\{ s^{n}w | n\in \mathbb{Z}, w\in W(u,t)\}$ uniquely. 
\end{prop}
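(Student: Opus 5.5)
We split the statement into existence and uniqueness. Existence is a normal-form argument, and uniqueness comes from building an explicit model of $X$ in which $\mathcal{B}$ is visibly independent.

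\textbf{Existence.} Using the relations $su=us$, $st=(1-u)t(1+t)^{-1}s$ and $s^{-1}t=(1-u-t)^{-1}ts^{-1}$ (the last is the identity $s^{-1}ts=(1-u-t)^{-1}t$ of the preceding Remark), every occurrence of $s^{\pm1}$ can be pushed to the left past $u$ and $t$. Each commutation keeps the minimal $(u,t)$-degree: $(1-u)t(1+t)^{-1}=t+(\text{degree}\ge 2)$ and $(1-u-t)^{-1}t=t+(\text{degree}\ge2)$. So a monomial $w_0s^{n_1}w_1\cdots s^{n_r}w_r$ of degree $k$ rewrites as $s^{n}w+(\text{terms in }F^{k+1})$, with $n=\sum n_i$ and $w=w_0\cdots w_r$.

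Now proceed degree by degree. Only finitely many monomials of $R$ of each degree contribute with a given $s$-exponent, but there may be infinitely many exponents; this is fine, since we allow infinite $\mathbb{Z}$-linear combinations. Choose $s^nw$-coefficients that agree with $x$ modulo $F^{k+1}$, subtract, and pass to the next degree. The process converges because $R$ is complete and $J$ is closed. We must check that infinitely many degree-$k$ contributions from different $s$-exponents yield only finitely many terms at each fixed $(n,w)$. Each $(n,w)$ slot receives contributions from finitely many terms of each degree, so the coefficients are well defined.

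\textbf{Uniqueness.} This is the main obstacle: we need to show that no nonzero infinite combination of $\mathcal{B}$ lies in $J$. I would construct a faithful model. Let $M$ be the $\mathbb{Z}$-module of formal sums $\sum c_{n,w}s^nw$ with the appropriate finiteness per degree. Define a ring structure on $M$ by the twisted multiplication $(s^nw)(s^mw')=s^{n+m}\,\phi^{-m}(w)\,w'$. Here $\phi$ is the continuous ring automorphism of $\mathbb{Z}\langle\langle u,t\rangle\rangle$ given by $\phi(u)=u$ and $\phi(t)=(1-u)t(1+t)^{-1}$, so that $s\,a\,s^{-1}=\phi(a)$ and hence $a\,s^{m}=s^{m}\phi^{-m}(a)$. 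The map $\phi$ is an automorphism because it is identity modulo degree $2$ on generators. Its inverse is $t\mapsto(1-u-t)^{-1}t$, which one verifies directly; this inverse is exactly the Remark's computation read in reverse.

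In other words, $M$ is a completed skew Laurent series ring $\mathbb{Z}\langle\langle u,t\rangle\rangle[s^{\pm1};\phi]$. Associativity follows because $\phi$ is a ring automorphism. The sums involved in products converge degree-wise, and this must be checked, since $\phi^{-m}$ preserves degree up to higher-order terms.

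Next, the universal property of $R$ gives a continuous homomorphism $R\to M$ sending $s^{\pm1},u,t$ to themselves. Its kernel contains the generators of $J$ by construction, and it is closed, so the map factors through $X$. The composite $\mathcal{B}$-combination $\mapsto X\to M$ is the identity on formal sums, so distinct combinations have distinct images in $X$. The routine but careful part is the convergence and topology bookkeeping for infinite $s$-exponent ranges; the conceptual key is recognizing that $\phi$ is an automorphism.
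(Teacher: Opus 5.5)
Your proof is correct. Existence is the same as in the paper, but for uniqueness you take a genuinely different route.

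The paper proves uniqueness through the associated graded. It identifies $F^{k}/F^{k+1}\cong G^{k}/(G^{k+1}+G^{k}\cap J)$ and asserts that modulo $G^{k+1}$ the relations from $G^{k}\cap J$ are generated by the initial forms $ss^{-1}=1$, $s^{-1}s=1$, $us=su$, $sts^{-1}=t$. From this it gets $F^{k}/F^{k+1}\cong\mathbb{Z}[s^{\pm1}]\otimes\mathbb{Z}W_{k}(u,t)$, and then it projects a putative relation $\sum n_{b}b=0$ to its lowest nonzero degree. That is shorter. However, its key step is a PBW/Gr\"obner-type claim, namely that cancellations of leading terms inside $J$ create no new low-degree relations, and the paper states this rather than proves it.

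Your completed skew Laurent ring $M=\mathbb{Z}\langle\langle u,t\rangle\rangle[s^{\pm1};\phi]$ is exactly what certifies that claim. The continuous map $R\to M$ kills $J$ and restricts to the identity on $\mathcal{B}$-combinations. Together with existence, this gives an explicit isomorphism $X\cong M$. From it, the paper's graded description, $X/F^{1}\cong\mathbb{Z}[s^{\pm1}]$, and the commutativity of $X/F^{2}$ all follow immediately. The price is the bookkeeping you flag: well-definedness of the twisted product degree by degree, continuity of $R\to M$, and closedness of the kernel. All of it goes through, since $\phi^{\pm1}$ preserve the augmentation filtration.

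Two small points.
\begin{itemize}
\item The relations you list, $st=(1-u)t(1+t)^{-1}s$ and $s^{-1}t=(1-u-t)^{-1}ts^{-1}$, move $s^{\pm1}$ to the right. To move them left you want $ts=s(1-u-t)^{-1}t$ and $ts^{-1}=s^{-1}(1-u)t(1+t)^{-1}$, i.e.\ $as^{m}=s^{m}\phi^{-m}(a)$, which is what you correctly use in the model.
\item Your worry about infinitely many $s$-exponents in a fixed degree is unnecessary. Elements of $R$ have finitely many monomials in each degree, and commuting $s^{\pm1}$ past $u,t$ preserves the total $s$-exponent. So each coefficient of a word $w$ is a Laurent polynomial in $s$.
\end{itemize}
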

\begin{proof}
By $us=su$, $sts^{-1}=(1-u)t(1+t)^{-1}$ and $s^{-1}ts=(1-u-t)^{-1}t$, we can move variables $s^{\pm1}$ to the left i.e. every element of $X$ can be represented as an infinite linear combination of $\mathcal{B}$. 

For the uniqueness, it is suffice to show that $x=\sum \limits_{b\in \mathcal{B}} n_{b}b =0$, where $n_{b} \in \mathbb{Z}$ implies each $n_{b}$ is zero. 
For contradiction, suppose that there is non-zero $n_{b}$. 
Let $l$ be the integer ${\rm min} \{ {\rm deg}(b)| b \in \mathcal{B},  n_{b}\neq0\}$. 
We claim that $F^{k}/F^{k+1}$ is isomorphic to $\mathbb{Z}[s^{\pm1}]\otimes\mathbb{Z}W_{k}(u,t)$ as $\mathbb{Z}-$module for any non-negativve integer $k$. 
By the second homomorphism theorem, we see that $F^{k}=\left(G^{k}+J\right)/J \cong G^{k}/\left(G^{k}\cap J\right)$. 
Under this isomorphism, the submodule $F^{k+1}=\left(G^{k+1}+J\right)/J$ of $F^{k}=\left(G^{k}+J\right)/J $ is mapped to $\left(G^{k+1}+G^{k}\cap J\right)/\left(G^{k}\cap J\right)$. 
Thus we see that $F^{k}/F^{k+1}$ is isomorphic to $G^{k}/\left(G^{k+1} + G^{k}\cap J\right)$. 
Note that considering $G^{k}$ modulo $G^{k+1}$ means that we ignore parts whose degrees are grater than $k$. 
Under this ignoring, relations induced by $G^{k}\cap J$ are generated by $ss^{-1}=1$, $s^{-1}s=1$, $us=su$ and $sts^{-1}=t$. 
Thus we can move $s^{\pm1}$ to the left and there are no relations between $u$ and $t$. 
Hence $F^{k}/F^{k+1}$ is isomorphic to $\mathbb{Z}[s^{\pm1}]\otimes\mathbb{Z}W_{k}(u,t)$, which has basis $\mathcal{B}_k=\{ s^{n}w | n\in \mathbb{Z}, w\in W_{k}(u,t)\}$ as $\mathbb{Z}-$module. 
Then by projecting $x=\sum \limits_{b\in \mathcal{B}} n_{b}b =0$ modulo $F^{l+1}$, we get a contradiction. 
\end{proof}

We determine the unit elements of $X$.
\begin{lem}\label{unit}
An element $x\in X$ is a unit element if and only if the degree $0$ part of $x$ is the form of $\pm s^{n}$ for some integer $n$. 
\end{lem}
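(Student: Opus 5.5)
We prove both directions using the filtration $X=F^{0}\supset F^{1}\supset\cdots$ and the unique normal form of Proposition~\ref{prop_base}. By the proof of that proposition, the degree-$0$ part $F^{0}/F^{1}\cong\mathbb{Z}[s^{\pm1}]$, and this quotient map $\pi\colon X\to X/F^{1}$ is a ring homomorphism. It kills $u$ and $t$, sends $s$ to $s$, and is compatible with the defining relations, since $sts^{-1}-(1-u)t(1+t)^{-1}\in G^{1}$. So $X/F^{1}$ is the Laurent polynomial ring $\mathbb{Z}[s,s^{-1}]$, and the degree-$0$ part of $x$ is just $\pi(x)$.

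\emph{Only if.} Suppose $xy=yx=1$. Then $\pi(x)\pi(y)=1$ in $\mathbb{Z}[s^{\pm1}]$. The units of this Laurent polynomial ring over the integral domain $\mathbb{Z}$ are exactly $\pm s^{n}$; this follows by comparing top and bottom degrees, whose sums in $\pi(x)\pi(y)$ must both be zero. Hence the degree-$0$ part of $x$ is $\pm s^{n}$.

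\emph{If.} Write $x=\pm s^{n}+a$ with $a\in F^{1}$. Then $x=\pm s^{n}(1\pm s^{-n}a)$, and $b:=\pm s^{-n}a\in F^{1}$ because $F^{1}$ is a two-sided ideal. It therefore suffices to invert $1+b$ for $b\in F^{1}$. The candidate inverse is $\sum_{m\ge 0}(-b)^{m}$, and the main point is that this series makes sense in $X$. We argue this in two steps.
\begin{itemize}
\item[(i)] $b^{m}\in F^{m}$, since $F^{i}F^{j}\subset F^{i+j}$ (inherited from $G^{i}G^{j}\subset G^{i+j}$).
\item[(ii)] The series converges in $X$. Lift $b$ to $\tilde b\in G^{1}\subset R$. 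In $R$, the series $\sum(-\tilde b)^{m}$ converges because $R$ is the degree completion and $\tilde b^{m}\in G^{m}$. Its image in $X=R/J$ is the desired element.
\end{itemize}
The identities $(1+\tilde b)\sum(-\tilde b)^{m}=1=\sum(-\tilde b)^{m}(1+\tilde b)$ hold in $R$ and then descend to $X$. Consequently $x^{-1}=(1+b)^{-1}(\pm s^{-n})$ is a two-sided inverse of $x$.

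The one point needing care is that $J$ was taken to be closed, which makes $X$ a well-defined quotient in which convergent series in $R$ map to well-defined elements. This is automatic from the definitions, so the remaining steps are routine.
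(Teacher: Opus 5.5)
Your proof is correct and follows the same route as the paper. Necessity comes from projecting to $X/F^{1}\cong\mathbb{Z}[s^{\pm1}]$, and sufficiency from writing $x=\epsilon s^{n}(1+\epsilon s^{-n}a)$ and inverting via the geometric series, which is exactly the paper's inverse $\epsilon(1+\epsilon s^{-n}A)^{-1}s^{-n}$. You additionally supply the details the paper leaves implicit: the units of $\mathbb{Z}[s^{\pm1}]$, and convergence, which you handle in $R$ before passing to the quotient (so your closing appeal to the closedness of $J$ is not actually needed).
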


\begin{proof}
Suppose $x\in X$ is a unit element. Then by projecting to $X/F^{1}=\mathbb{Z}[s^{\pm1}]$, we see that the degree $0$ part of $x$ is the form of $\pm s^{n}$ for some integer $n$. 
Conversely, suppose $x$ is the form of $\epsilon s^{n}+A$, where $\epsilon \in \{\pm1\}$, $n\in \mathbb{Z}$ and $A\in F^{1}$. 
Then $\epsilon \left(1+\epsilon s^{-n}A\right)^{-1}s^{-n}$ is the inverse of $x$. 
\end{proof}

\subsection{Biquandle structure of $X$}\label{biqdlX}
First, we recall the definition of biquandles. 
We adopt more memorable definition of biquandles than the original one, the author thinks. 
See \cite{elhamdadi} for example. 
\begin{defini}\label{def_biqdl}(Biquandle)\\
A {\it biquandle} $\left( B; \underline{\triangleright}, \overline{\triangleright}\right)$ is a triad of a set $B$ and operations $\underline{\triangleright},\ \overline{\triangleright}: B\times B \rightarrow B$ which satisfies: 
\begin{itemize}
\item[(i)] for all $x \in B$, $x\ \underline{\triangleright}\ x = x\ \overline{\triangleright}\ x$, 
\item[(ii)] for all $y \in B$, the maps $(\cdot \ \underline{\triangleright} \ y),\ (\cdot \  \overline{\triangleright} \ y) : B\rightarrow B$ are invertible, 
\item[(iii)] the map $S:B\times B\rightarrow B\times B$ defined by $S\left(x,y \right)=\left( y\ \overline{\triangleright}\ x, x\ \underline{\triangleright}\ y\right)$ is invertible, and
\item[(iv)] for all $x,y,z\in B$, the following exchange lows hold. 
  \begin{align}
(x\ \underline{\triangleright}\ y)\ \underline{\triangleright} \ (z\ \underline{\triangleright}\ y)  &= (x\ \underline{\triangleright}\ z)\ \underline{\triangleright}\ (y\ \overline{\triangleright}\ z) \notag \\
(x\ \underline{\triangleright}\ y)\ \overline{\triangleright} \ (z\ \underline{\triangleright}\ y)  &= (x\ \overline{\triangleright}\ z)\ \underline{\triangleright}\ (y\ \overline{\triangleright}\ z) \notag  \\
(x\ \overline{\triangleright}\ y)\ \overline{\triangleright} \ (z\ \overline{\triangleright}\ y)  &= (x\ \overline{\triangleright}\ z)\ \overline{\triangleright}\ (y\ \underline{\triangleright}\ z) \notag 
     \end{align}
\end{itemize}
\end{defini}

\begin{example}(Alexander biquandle)\\
The triad $\left( \mathbb{Z}[U^{\pm1}, V^{\pm1}]; \underline{\triangleright}, \overline{\triangleright}\right)$ defined by $x\ \underline{\triangleright}\ y=U^{-1}x+\left(V-U^{-1}\right)y$ and $x\ \overline{\triangleright} \ y=Vx$ is a biquandle. 
This is called {\it the Alexander biquandle}. 
The natations here may differ from the ordinary definition. 
\end{example}

We give a biquandle structure to $X$. 
\begin{thm}
$\left( X; \underline{\triangleright}, \overline{\triangleright}\right)$ is a biquandle by defining $x\ \underline{\triangleright}\ y=s^{-1}\cdot x-s^{-1}(u+t)\cdot y$ and $x\ \overline{\triangleright}\ y=s^{-1}(1-u)\cdot x-s^{-1}t\cdot y$ for $x,y \in X$. 
\end{thm}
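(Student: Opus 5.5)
The plan is to check the axioms (i)--(iv) of Definition~\ref{def_biqdl} one at a time. Throughout, the operations are left multiplications by fixed elements of $X$:
\[
x\ \underline{\triangleright}\ y=a x+b y,\qquad x\ \overline{\triangleright}\ y=c x+d y,
\]
with $a=s^{-1}$, $b=-s^{-1}(u+t)$, $c=s^{-1}(1-u)$ and $d=-s^{-1}t$. Since $X$ is non-commutative, every coefficient must stay on the left, and products of coefficients must be kept in order.

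\emph{Axiom (i).} This reduces to the identity $a+b=c+d$ in $X$. Both sides equal $s^{-1}(1-u-t)$, so it holds immediately.

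\emph{Axiom (ii).} The map $x\mapsto ax+by$ is invertible because $a=s^{-1}$ is a unit. The map $x\mapsto cx+dy$ is invertible because, by Lemma~\ref{unit}, $c=s^{-1}(1-u)$ has degree $0$ part $s^{-1}$ and is therefore a unit. In both cases the inverse is $z\mapsto (\text{coefficient})^{-1}(z-\cdots)$.

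\emph{Axiom (iii).} Here $S(x,y)=(cy+dx,\ ax+by)$, which is a $\mathbb{Z}$-linear map on $X\times X$ given by left multiplication by the matrix $\begin{pmatrix} d & c\\ a & b\end{pmatrix}$. From the second component, $x=a^{-1}(q-by)=s(q-by)$. Substituting into the first component gives
\[
p=cy+ds(q-by),
\]
so the coefficient of $y$ is $c-dsb$. Now $ds=-s^{-1}ts$, hence
\[
c-dsb=s^{-1}(1-u)-s^{-1}ts\,s^{-1}(u+t)=s^{-1}(1-u-t(u+t)).
\]
Its degree $0$ part is $s^{-1}$, so by Lemma~\ref{unit} it is a unit. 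We can therefore solve for $y$ and then for $x$, which shows that $S$ is invertible.

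\emph{Axiom (iv).} Expanding each exchange law by linearity and comparing the coefficients of $x$, $y$ and $z$ turns it into identities among products of $a,b,c,d$:
\begin{itemize}
\item The first law gives $a^2=a^2$ for $x$; $ab=ba+bd$ for $y$; and $ba=ab+bc$ for $z$.
\item The second law gives $ca=ac$ for $x$; $cb=bc+ad$ for $y$ (after regrouping); and $da=ab+bc$ for $z$.
\item The third law gives $c^2$ for $x$; $cd=da+dc$ for $y$; and $d^2+dc=\cdots$ for $z$.
\end{itemize}
(I will write these out carefully, since the order of factors matters.)

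Each identity has the form $s^{-1}\alpha s^{-1}\beta=s^{-1}\gamma s^{-1}\delta$. To compare two sides I will bring both to the normal form of Proposition~\ref{prop_base}, $s^{-2}\cdot(\text{series in }u,t)$, using
\[
s\,u\,s^{-1}=u,\qquad s(u+t)s^{-1}=(u+t)(1+t)^{-1},\qquad sts^{-1}=(1-u)t(1+t)^{-1},\qquad s(1-u)s^{-1}=1-u,
\]
all taken from the Remark. For example, $ca=s^{-1}(1-u)s^{-1}=s^{-2}(1-u)=ac$ holds because $u$ commutes with $s$. Likewise
\[
ab=-s^{-2}(u+t),\qquad ba=-s^{-1}(u+t)s^{-1}=-s^{-2}(u+t)(1+t)^{-1}.
\]

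The main obstacle is the bookkeeping in these nine coefficient identities. They are exactly where the defining relation $sts^{-1}=(1-u)t(1+t)^{-1}$ must enter, and I expect it to be forced by, for instance, the $z$-coefficient of the first law. I would first verify that one carefully: both sides should reduce to $s^{-2}$ times rational expressions in $u$ and $t$, and equality should follow by clearing the right factor $(1+t)^{-1}$. The remaining identities should then follow by the same manipulation, using the listed consequences $s^{-1}ts=(1-u-t)^{-1}t$ and the conjugates of $u+t$ and $1-u-t$.
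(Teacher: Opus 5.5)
Your plan is the paper's, and (i)--(iii) are fine. In (iii) your unit $c-dsb=s^{-1}(1-u-tu-t^{2})$ is exactly the element $1-u-tu-t^{2}$ that the paper inverts.

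The gap is (iv). This is the only axiom where the relation $sts^{-1}=(1-u)t(1+t)^{-1}$ actually enters, and you verify no identity there. Moreover, the coefficient identities you write down are not what the expansion produces, and they are false in $X$. Work modulo $F^{2}$, where $X$ is commutative and
$ab\equiv ba\equiv bc\equiv cb\equiv -s^{-2}(u+t)$, $ad\equiv da\equiv cd\equiv dc\equiv -s^{-2}t$, $bd\equiv 0$.
There your $ba=ab+bc$ becomes $-s^{-2}(u+t)=-2s^{-2}(u+t)$. Likewise $cb=bc+ad$, $da=ab+bc$ and $cd=da+dc$ already fail in degree one. Your $ab=ba+bd$ fails in degree two: the two sides differ by $2s^{-2}(u+t)t$. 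Trying to check these would lead you to ``disprove'' the theorem.

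Expanding correctly, the left side of the first law is $a^{2}x+(ab+b^{2})y+ba\,z$ and the right side is $a^{2}x+bc\,y+(ab+bd)z$. Doing the same for the other two laws, the identities to prove are
\begin{align*}
\text{first law: }\ & ab+b^{2}=bc,\quad ba=ab+bd,\\
\text{second law: }\ & ca=ac,\quad cb+db=bc,\quad da=ad+bd,\\
\text{third law: }\ & cd+d^{2}=da,\quad dc=cd+db.
\end{align*}
Put $P=s(u+t)s^{-1}=(u+t)(1+t)^{-1}$ and $Q=sts^{-1}=(1-u)t(1+t)^{-1}$, so that $P=u+Q$. Factor $s^{-2}$ out on the left of each product, using $s^{-1}\alpha s^{-1}\beta=s^{-2}(s\alpha s^{-1})\beta$. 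Then:
\begin{itemize}
\item $ab+b^{2}=bc$ and $ba=ab+bd$ both become $P(1+t)=u+t$;
\item $cb+db=bc$, $cd+d^{2}=da$ and $dc=cd+db$ all become $Q(1+t)=(1-u)t$;
\item $da=ad+bd$ becomes $Pt+Q=t$, which holds because $t$ commutes with $(1+t)^{-1}$;
\item $ca=ac$ is just $su=us$.
\end{itemize}
This is precisely the paper's verification, so once you correct the bookkeeping your proof goes through.
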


\begin{proof}
We check the conditions for biquandles in Definition~\ref{def_biqdl}. 
\begin{itemize}
\item[(i)] $x\ \underline{\triangleright}\ x=s^{-1}(1-u-t)x=x\ \overline{\triangleright}\ x$ 

\item[(ii)] 
\begin{align}
&z=x\ \underline{\triangleright}\ y=s^{-1}\cdot x -s^{-1}(u+t)\cdot y \Longleftrightarrow x=s\cdot z+(u+t)\cdot y \notag \\
&z=x\ \overline{\triangleright}\ y=s^{-1}(1-u)\cdot x-s^{-1}t\cdot y \Longleftrightarrow x=(1-u)^{-1}s\cdot z+(1-u)^{-1}t\cdot y \notag
\end{align}

\item[(iii)]
\begin{align}
&\begin{cases}
z=y\ \overline{\triangleright}\ x=s^{-1}(1-u)\cdot y-s^{-1}t\cdot x\\
w=x\ \underline{\triangleright}\ y=s^{-1}\cdot x-s^{-1}(u+t)\cdot y
\end{cases} \notag \\ 
\Longleftrightarrow &
\begin{cases}
s\cdot z= (1-u)\cdot y-t\cdot x\\
x=(u+t)\cdot y+s\cdot w
\end{cases}  \notag \\ 
\Longleftrightarrow &
\begin{cases}
s\cdot z= (1-u)\cdot y-t \left((u+t)\cdot y+s\cdot w\right)\\
x=(u+t)\cdot y+s\cdot w
\end{cases} \notag \\ 
\Longleftrightarrow &
\begin{cases}
y=\left(1-u-tu-t^{2}\right)^{-1} s\cdot z+\left(1-u-tu-t^{2}\right)^{-1} ts\cdot w\\
x=(u+t)\cdot y+s\cdot w
\end{cases} \notag \\ 
\Longleftrightarrow &
\begin{cases}
y=\left(1-u-tu-t^{2}\right)^{-1} s\cdot z+\left(1-u-tu-t^{2}\right)^{-1} ts\cdot w\\
x=(u+t)\left(\left(1-u-tu-t^{2}\right)^{-1} s\cdot z+\left(1-u-tu-t^{2}\right)^{-1} ts\cdot w\right)+s\cdot w
\end{cases} \notag \\ 
\Longleftrightarrow &
\begin{cases}
x=(u+t)\left(1-u-tu-t^{2}\right)^{-1} s\cdot z+\left(s+(u+t)\left(1-u-tu-t^{2}\right)^{-1} ts)\right)\cdot w \\
y=\left(1-u-tu-t^{2}\right)^{-1} s\cdot z+\left(1-u-tu-t^{2}\right)^{-1} ts\cdot w
\end{cases} \notag 
\end{align}

\item[(iv)]
 \begin{itemize}
  \item[]
\begin{align}
&(x\ \underline{\triangleright}\ y)\ \underline{\triangleright}\ (z\ \underline{\triangleright}\ y) \notag \\ 
=& s^{-1}\left(s^{-1}\cdot x-s^{-1}(u+t)\cdot y\right)-s^{-1}(u+t)\left(s^{-1}\cdot z-s^{-1}(u+t)\cdot y\right) \notag \\
=& s^{-2}\cdot x+\left(\left(s^{-1}(u+t)\right)^{2}-s^{-2}(u+t)\right)\cdot y-s^{-1}(u+t)s^{-1}\cdot z \notag 
 \end{align}
  \item[]
\begin{align}
&(x\ \underline{\triangleright}\ z)\ \underline{\triangleright}\ (y\ \overline{\triangleright}\ z) \notag \\ 
=& s^{-1}\left(s^{-1}\cdot x-s^{-1}(u+t)\cdot z\right)-s^{-1}(u+t)\left(s^{-1}(1-u)\cdot y-s^{-1}t\cdot z\right) \notag \\
=& s^{-2}\cdot x-s^{-1}(u+t)s^{-1}(1-u)\cdot y+\left(s^{-1}(u+t)s^{-1}t-s^{-2}(u+t)\right)\cdot z \notag 
 \end{align}
\item[]
\item[] Note that the following two equations hold:
\begin{align}
&\left(\left(s^{-1}(u+t)\right)^{2}-s^{-2}(u+t)\right)-\left(-s^{-1}(u+t)s^{-1}(1-u)\right) \notag  \\
=&s^{-2}\left( s(u+t)s^{-1}(u+t)-(u+t)+s(u+t)s^{-1}(1-u)\right) \notag \\
=&s^{-2}\left(s(u+t)s^{-1}(1+t)-(u+t)\right) \notag \\
=&s^{-2}\left( (u+t)(1+t)^{-1}\cdot (1+t)-(u+t)\right) \notag \\
=&0 \notag
\end{align} 

\begin{align}
&\left(s^{-1}(u+t)s^{-1}t-s^{-2}(u+t)\right)-\left(-s^{-1}(u+t)s^{-1}\right) \notag  \\
=& s^{-2}\left(s(u+t)s^{-1}t-(u+t)+s(u+t)s^{-1}\right) \notag \\
=& s^{-2}\left(s(u+t)s^{-1}(1+t)-(u+t)\right) \notag \\
=& s^{-2}\left( (u+t)(1+t)^{-1} \cdot(1+t)-(u+t)\right) \notag \\
=&0 \notag
\end{align}

  \item[]
\begin{align}
&(x\ \underline{\triangleright}\ y)\ \overline{\triangleright}\ (z\ \underline{\triangleright}\ y) \notag \\ 
=& s^{-1}(1-u)\left(s^{-1}\cdot x-s^{-1}(u+t)\cdot y\right)-s^{-1}t\left(s^{-1}\cdot z-s^{-1}(u+t)\cdot y\right) \notag \\
=& s^{-2}(1-u)\cdot x+\left(s^{-1}ts^{-1}(u+t)-s^{-2}(1-u)(u+t)\right)\cdot y-s^{-1}ts^{-1}\cdot z \notag 
 \end{align}
  \item[]
\begin{align}
&(x\ \overline{\triangleright}\ z)\ \underline{\triangleright}\ (y\ \overline{\triangleright}\ z) \notag \\ 
=& s^{-1}\left(s^{-1}(1-u)\cdot x-s^{-1}t\cdot z\right)-s^{-1}(u+t)\left(s^{-1}(1-u)\cdot y-s^{-1}t\cdot z\right) \notag \\
=& s^{-2}(1-u)\cdot x-s^{-1}(u+t)s^{-1}(1-u)\cdot y+\left(s^{-1}(u+t)s^{-1}t-s^{-2}t\right)\cdot z \notag 
 \end{align}
\item[]
\item[] Note that the following two equations hold:
\begin{align}
&\left(s^{-1}ts^{-1}(u+t)-s^{-2}(1-u)(u+t)\right)-\left(-s^{-1}(u+t)s^{-1}(1-u)\right)  \notag  \\
=& s^{-2}\left( sts^{-1}(u+t)-(1-u)(u+t)+s(u+t)s^{-1}(1-u)\right) \notag \\
=& s^{-2}\left( sts^{-1}(u+t)-(1-u)(u+t)+u(1-u)+sts^{-1}(1-u)\right) \notag \\
=& s^{-2}\left( sts^{-1}(1+t)-(1-u)(u+t)+(1-u)u\right) \notag \\
=& s^{-2}\left( (1-u)t(1+t)^{-1}\cdot(1+t)-(1-u)t\right) \notag \\
=&0 \notag
\end{align} 

\begin{align}
& \left(s^{-1}(u+t)s^{-1}t-s^{-2}t\right)-\left(-s^{-1}ts^{-1}\right) \notag  \\
=&s^{-2}\left(s(u+t)s^{-1}t-t+sts^{-1}\right) \notag \\
=&s^{-2}\left( (u+t)(1+t)^{-1}\cdot t-t+(1-u)t(1+t)^{-1}\right)  \notag \\
=& s^{-2}\left( (u+t)-(1+t)+(1-u)\right)t(1+t)^{-1} \notag \\
=&0 \notag
\end{align}

  \item[]
\begin{align}
&(x\ \overline{\triangleright}\ y)\ \overline{\triangleright}\ (z\ \overline{\triangleright}\ y) \notag \\ 
=& s^{-1}(1-u)\left(s^{-1}(1-u)\cdot x-s^{-1}t\cdot y\right)-s^{-1}t\left(s^{-1}(1-u)\cdot z-s^{-1}t\cdot y\right) \notag \\
=& s^{-2}(1-u)^{2}\cdot x+\left((s^{-1}t)^{2}-s^{-2}(1-u)t\right)\cdot y-s^{-1}ts^{-1}(1-u)\cdot z \notag 
 \end{align}
  \item[]
\begin{align}
&(x\ \overline{\triangleright}\ z)\ \overline{\triangleright}\ (y\ \underline{\triangleright}\ z) \notag \\ 
=& s^{-1}(1-u)\left(s^{-1}(1-u)\cdot x-s^{-1}t\cdot z\right)-s^{-1}t\left(s^{-1}\cdot y-s^{-1}(u+t)\cdot z\right) \notag \\
=& s^{-2}(1-u)^{2}\cdot x-s^{-1}ts^{-1}\cdot y+\left(s^{-1}ts^{-1}(u+t)-s^{-2}(1-u)t\right)\cdot z \notag 
 \end{align}
\item[]
\item[] Note that the following two equations hold:
\begin{align}
& \left((s^{-1}t)^{2}-s^{-2}(1-u)t\right)- \left(-s^{-1}ts^{-1} \right) \notag  \\
=&s^{-2}\left( sts^{-1}t-(1-u)t+sts^{-1}\right)  \notag \\
=& s^{-2}\left( sts^{-1}(1+t)-(1-u)t\right) \notag \\
=& s^{-2}\left( (1-u)t(1+t)^{-1}\cdot(1+t)-(1-u)t\right) \notag \\
=&0 \notag
\end{align} 

\begin{align}
& \left(s^{-1}ts^{-1}(u+t)-s^{-2}(1-u)t\right)- \left(-s^{-1}ts^{-1}(1-u)\right) \notag  \\
=&s^{-2}\left( sts^{-1}(u+t)-(1-u)t+sts^{-1}(1-u)\right) \notag \\
=& s^{-2}\left(sts^{-1}(1+t)-(1-u)t \right) \notag \\
=& s^{-2}\left( (1-u)t(1+t)^{-1}\cdot(1+t)-(1-u)t\right) \notag \\
=&0 \notag
\end{align} 

 \end{itemize}

\end{itemize}
\end{proof}

\section{Long virtual knots and biquandle colorings}\label{section3}
In this section, first we recall the definition of long virtual knots and some terminologies (in Subsection~\ref{subsec_long virtual knots}). 
Then we recall biquandle colorings of long virtual knots (in Subsection~\ref{subsec_biquandle coloring}). 

\subsection{Long virtual knots}\label{subsec_long virtual knots}

\begin{defini}
{\it A long virtual knot diagram} is a smooth immersion $\phi : \mathbb{R} \longrightarrow \mathbb{R}^{2}$ such that 
\begin{itemize}
\item[(1)] there is a positive real number $r$ such that $\phi(x)=(x,0)$ for any real number $x$ with $|x|>r$, 
\item[(2)] each intersection point is double and transverse, and
\item[(3)] each intersection point is endowed with classical (with a choice for overpass and underpass specified) or virtual crossing  (which is depicted as encircled double point) structure. 
\end{itemize}
\end{defini}

\begin{defini}
{\it A long virtual knot} is an equivalence class of long virtual knot diagrams under generalized Reidemeister moves as shown in Figure~\ref{generalized_reidemeister}. 
Each long virtual knot are oriented by the standard orientation of $\mathbb{R}$. 
With the orientation, each classical crossing is classified into two classes, {\it positive crossings} and {\it negative crossings} as in Figure~\ref{sign_crossing}. 
For each classical crossing $c$, let $\epsilon (c)$ denote the number $+1$ or $-1$ if $c$ is positive crossing or negative crossing, respectively. 
A long virtual knot represented by a long virtual knot diagram without virtual crossings is called {\it classical}. 
\end{defini}

\begin{figure}[htbp]
 \begin{center}
  \includegraphics[width=140mm]{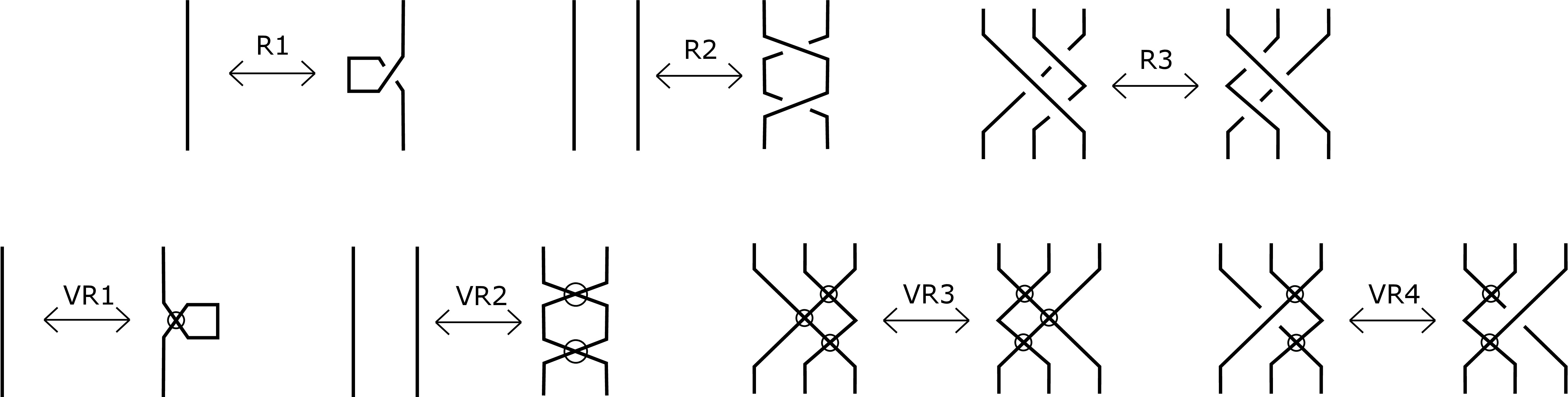}
 \end{center}
 \caption{Generalized Reidemeister moves}
 \label{generalized_reidemeister}
\end{figure}

\begin{figure}[htbp]
 \begin{center}
  \includegraphics[width=50mm]{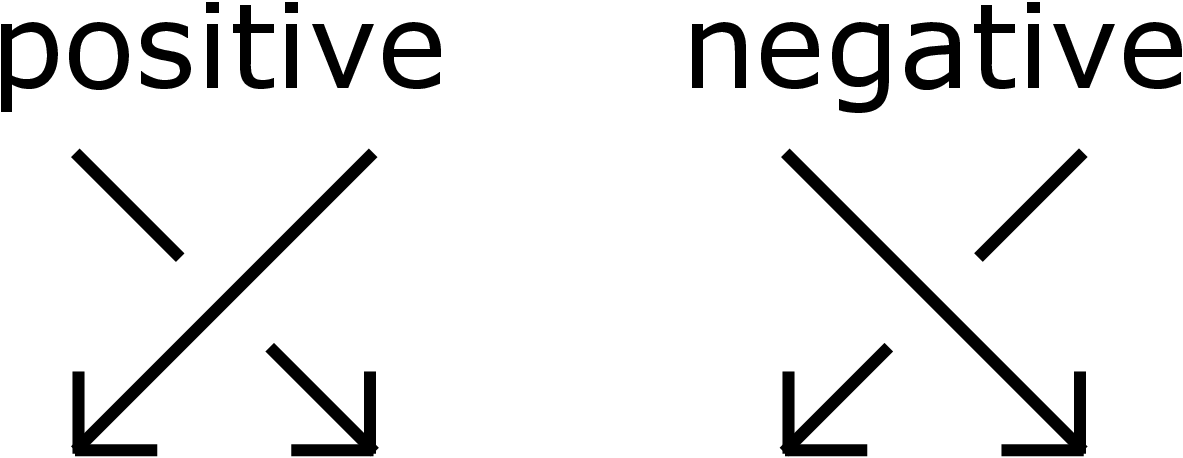}
 \end{center}
 \caption{The sign of classical crossings}
 \label{sign_crossing}
\end{figure}

The set of long virtual knots admits a structure of monoid. 
\begin{defini}
Let $K_{1}$ and $K_{2}$ be two long virtual knots. 
Take long virtual knot diagrams $D_{1}$ and $D_{2}$ representing $K_{1}$ and $K_{2}$, respectively. 
Let $D_{1}\cdot D_{2}$ denote the diagram obtained by concatenating $D_1$ and $D_2$ in this order. 
Then the long virtual knot represented by $D_{1}\cdot D_{2}$ is called {\it the product of} $D_{1}$ {\it and} $D_2$, and denoted by $K_{1}\cdot K_{2}$. 
\end{defini}

There ia a useful description for long virtual knots called the pointed Gauss diagram. 
Many invariants of long virtual knots can be computed relatively easily through it. 

\begin{defini}(Pointed Gauss diagram)\\
Take a counterclockwise oriented circle $S^{1}$ with one specified point $p$ on it. 
For a long virtual knot diagram $D$, fix an orientation preserving immersion $\phi$ from $S^{1}\setminus \{p\}$ to the image of $D$. 
For each classical crossing $c$ of $D$, the preimage $\phi^{-1}(c)$ of $c$ consists of two points of $S^{1}\setminus \{p\}$. 
Let $c_{O}$ and $c_{U}$ be the points of $\phi^{-1}(c)$ corresponding to for the points being on  the overarc and the under arc, respectively. 
Connect $c_{O}$ and $c_{U}$ by a dashed arrow oriented from $c_{O}$ to $c_{U}$ for each classical crossing of $D$. 
Assign this dashed arrow the symbol $+$ or $-$ according to the sign of the crossing $c$. 
The obtained pointed circle with dashed arrows is called {\it the pointed Gauss diagram of} $D$, denoted by $G(D)$. 
An example of a pointed Gauss diagram of a long virtual knot is presented in Figure~\ref{gaussdiagram_k3(4)}. 
\end{defini}

\begin{figure}[htbp]
 \begin{center}
  \includegraphics[width=60mm]{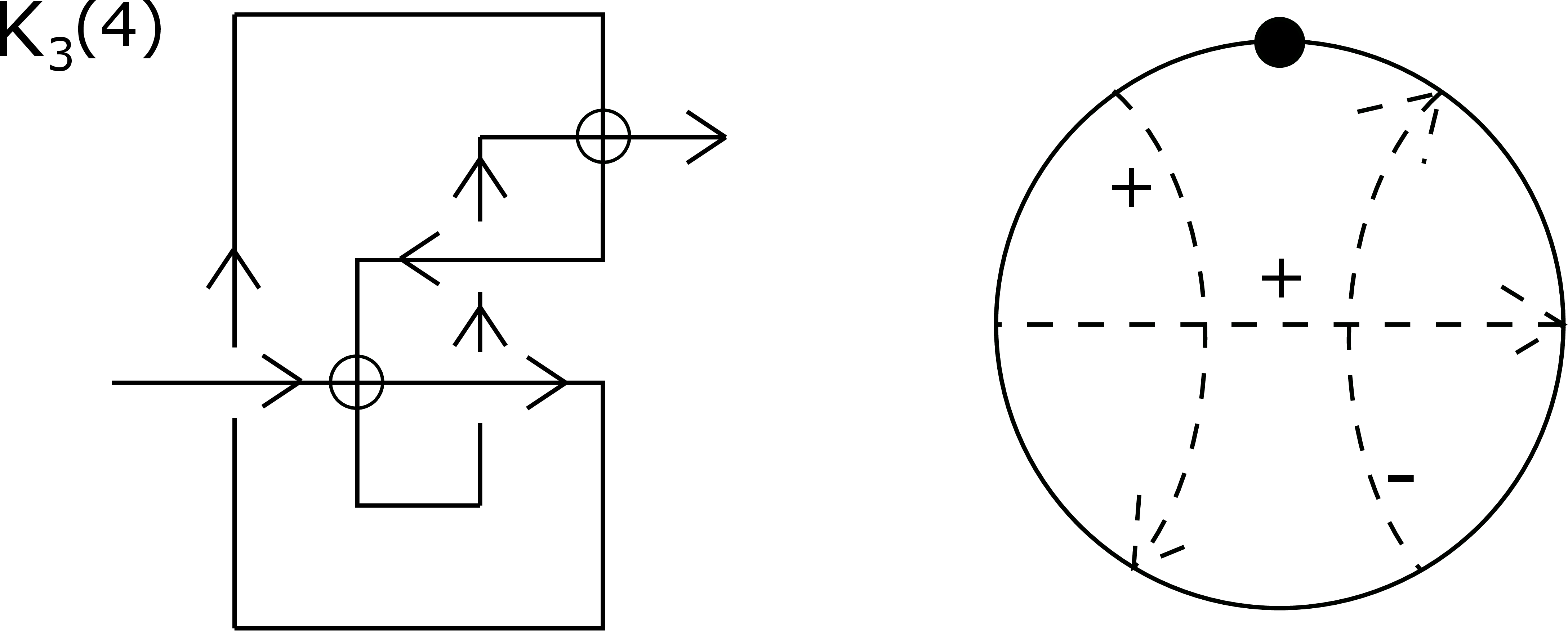}
 \end{center}
 \caption{An example of a pointed Gauss diagram}
 \label{gaussdiagram_k3(4)}
\end{figure}

We will introduce a well-known invariant of long virtual knots. 
Before that, we prepare the terminologies. 

\begin{defini}(EO and EU \cite{manturov})\\
Let $D$ be a long virtual knot diagram. 
For each classical crossing $c$ of $D$, either one of  ``we reach $c$ along overarc first" or ``we reach $c$ along underarc first" holds. 
The set of crossings satisfying the former is denoted by EO, {\it early overarc}, and that of crossings satisfying the latter is denoted by EU, {\it earky underarc}.
\end{defini}

\begin{defini}(Index of a crossing)\\
Let $G(D)$ be the pointed Gauss diagram of a long virtual knot diagram $D$. 
For each dashed arrow in $G(D)$, we assign symbols $+$ and $-$ to the endpoints of the dashed arrow so that the symbol assigned to the tail is the same as that of the dashed arrow and that the symbol assigned to the head is the opposite to that of the dashed arrow. 
For each classical crossing $c$ of $D$, corresponding dashed arrow in $G(D)$ cuts unique interval without the specified point $p$ from the circle. 
The signed sum of symbols on the interior of the interval is denoted by $\alpha(c)$ and called {\it the index of }$c$. 
\end{defini}

\begin{fact}\label{writhe polynomial}
Let $K$ be a long virtual knot. 
Take a long virtual knot diagram $D$. 
Then  
\begin{align}
 \sum \limits_{c\in {\rm EO}} \epsilon(c)\cdot(s^{\alpha(c)}-1)  +  \sum \limits_{d\in {\rm EU}} \epsilon(d)\cdot(s^{-\alpha(d)}-1) \in \mathbb{Z}[s^{\pm1}] \notag
\end{align}
is independent of the choice of $D$. 
Thus this becomes an invariant of long virtual knots, denoted by $W_{K}(s)$. 
\end{fact}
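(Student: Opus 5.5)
The plan is to check invariance of the polynomial under each generalized Reidemeister move applied to a diagram $D$. First I would record how each move changes the pointed Gauss diagram $G(D)$. Virtual moves, including the mixed move, do not change $G(D)$ at all, so the expression is unchanged. It therefore remains to treat the classical moves R1, R2 and R3. For these it is convenient to rewrite the expression as a sum of local contributions $w(c)$, where $w(c)=\epsilon(c)(s^{\alpha(c)}-1)$ if $c\in{\rm EO}$ and $w(c)=\epsilon(c)(s^{-\alpha(c)}-1)$ if $c\in{\rm EU}$. The first thing to show is that each $w(c)$ depends only on the crossing itself, not on which part of the diagram we look at.

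The key lemma is that $\alpha(c)$ is unchanged for every crossing $c$ not involved in the move. Since the endpoints of other arrows move only locally, it suffices to examine the interval cut by the arrow of $c$. An R1 move adds (or removes) an arrow whose endpoints are adjacent on the circle. These two endpoints carry opposite symbols, so they contribute $0$ to any interval containing both. No interval of another crossing can contain exactly one of them, because the two endpoints are adjacent. For R2 the two new arrows have opposite signs and consecutive heads and consecutive tails. The symbols at the two adjacent tail points are $+$ and $-$, so they cancel, and the same holds at the two head points. For R3 the three arrows only slide their endpoints past one another within three small arcs, so each small arc keeps the same multiset of symbols. Hence the indices of all other crossings are preserved. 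Their EO/EU status is also preserved, because the order in which the over and under points are reached is unchanged.

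Next I would compute the contributions of the crossings involved in each move. In R1 the new crossing $c$ has an interval containing no endpoints in its interior, so $\alpha(c)=0$ and $w(c)=\epsilon(c)(1-1)=0$. In R2 the two crossings $c_1,c_2$ have opposite signs and lie in the same class. Their intervals differ only by endpoints of the pair itself, which cancel as above, so $\alpha(c_1)=\alpha(c_2)$ and $w(c_1)+w(c_2)=0$. I must take care of the case where the intervals contain an endpoint of the partner arrow. There the two adjacent ends have opposite symbols, and I would check directly that the extra contributions agree for both arrows.

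The main obstacle is R3, where I need $\alpha$ of each of the three crossings to be unchanged. The interval of the arrow of crossing $a$ may contain one endpoint of the arrow of crossing $b$ and one endpoint of the arrow of crossing $c$. After the move, the small segments are traversed in a different relative order, so an endpoint may leave or enter the interval. I would split into the finitely many R3 configurations: choices of orientations and of the position of the base point $p$ relative to the three strands. Using Polyak's observation, it suffices to treat the oriented R3 with all positive crossings together with R2-moves. In each case I would check that, after the move, the symbols that leave the interval are exactly balanced by those that enter it. The underlying reason is that a crossing of the triangle has its interval gaining or losing both endpoints of one other arrow, or an endpoint of each of the other two arrows with opposite symbols. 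With that case check, the sum is invariant, which proves the statement.
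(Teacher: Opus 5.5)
Your argument is correct in outline, but it takes a different route from the paper. The paper gives no proof of its own: it identifies $W_K(s)$ with the writhe polynomial of the closure $\hat K$ and relies on the known invariance of that polynomial. Behind this identification is a simple observation. Let $\beta(c)$ be the signed sum of symbols on the arc running from the tail of the arrow of $c$ to its head in the direction of the circle; this is the usual chord index and does not involve $p$. Then $\alpha(c)=\beta(c)$ for $c\in{\rm EO}$ and $\alpha(c)=-\beta(c)$ for $c\in{\rm EU}$, because the symbols of all endpoints other than those of $c$ sum to $0$. Hence the displayed sum equals $\sum_c\epsilon(c)(s^{\beta(c)}-1)$.

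Your direct Reidemeister verification is self-contained, and the virtual, R1 and R2 steps are right. One small point in R2: for parallel strands, each interval contains one endpoint of the partner arrow. These two endpoints carry equal symbols rather than cancelling, which is exactly what you need. Passing to $\beta$ would gain you two things. Your R3 analysis would become purely local, with no case split over the position of $p$. You would also get for free that $W_K$ depends only on $\hat K$, which your version does not show.

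The mechanism you state for R3 is not what actually happens, although your conclusion that each $\alpha$ is preserved is true. For each crossing of the triangle, exactly one endpoint of each of the other two arrows crosses the boundary of its interval; it is never both endpoints of one arrow. The two changes cancel because the symbols are equal when one point enters and the other leaves, and opposite when both enter or both leave. Your base case $\sigma_1\sigma_2\sigma_1=\sigma_2\sigma_1\sigma_2$, with all signs $+$, shows this. The top--middle crossing sees two tails, both with symbol $+$, one entering and one leaving. The middle--bottom crossing likewise sees two heads with symbol $-$. So your case check will go through, but not for the reason you give.
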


\begin{rmk}
The above $W_{K}(s)$ is the invariant called the writhe polynomial of the virtual knot $\hat{K}$ obtained by closing $K$. 
For example, $W_{K_{3}(4)}(s)=(s-1)+(s^{-2}-1)-(s^{-1}-1)=s-1-s^{-1}+s^{-2}$ for the long virtual knot $K_{3}(4)$ in Figure~\ref{gaussdiagram_k3(4)}. 
Note that $W_{K}(s)$ is different from so-called the writhe polynomial of long virtual knots defined through long virtual knot diagrams by :
\begin{align}
 \sum \limits_{c\in {\rm EO}\cup {\rm EU}} \epsilon(c)\cdot(s^{\alpha(c)}-1)  \in \mathbb{Z}[s^{\pm1}] \notag
\end{align}
\end{rmk}

A projection from the set of long virtual knots to some subset can be defined as follows. 
\begin{defini}\label{eonization}
Let $\mathcal{EO}$ be a set of long virtual knots each of which admits a long virtual knot diagram without EU crossings. 
For a long virtual knot diagram $D$, set ${\rm EO}(D)$ to be a long virtual knot diagram obtained by performing crossing changes at each EU crossing. 
It can be checked that ${\rm EO}(D)$ and ${\rm EO}(D')$ are related by generalized Reidemeister moves if two long virtual knot diagrams $D$ and $D'$ are related by generalized Reidemeister moves. 
Thus we can define ${\rm EO}(K) \in \mathcal{EO}$ for a long virtual knot $K$ as the long virtual knot represented by ${\rm EO}(D)$, where $D$ is a long virtual knot diagram representing $K$. 
This ${\rm EO}$ is a projection from the set of long virtual knots to $\mathcal{EO}$. 
Moreover, this ${\rm EO}$ preserves the products. In other words, ${\rm EO}(K_{1} \cdot K_{2})={\rm EO}(K_{1})\cdot {\rm EO}(K_{2})$ holds for long virtual knots $K_{1}$ and $K_{2}$. 
\end{defini}

\begin{lem}\label{eo_writhe}
Let $K$ be a long virtual knot. Take a long virtual knot diagram $D$ representing $K$. 
Then $W_{{\rm EO}(K)}(s)$ can be computed through $D$ as:
 \begin{align}
 W_{{\rm EO}(K)}(s)=\sum \limits_{c\in {\rm EO}} \epsilon(c)\cdot(s^{\alpha(c)}-1)  -  \sum \limits_{d\in {\rm EU}} \epsilon(d)\cdot(s^{\alpha(d)}-1)  \notag
 \end{align}
\end{lem}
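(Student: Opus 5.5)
We compute $W_{{\rm EO}(K)}(s)$ directly from the diagram ${\rm EO}(D)$. By Definition~\ref{eonization}, ${\rm EO}(D)$ represents ${\rm EO}(K)$, so Fact~\ref{writhe polynomial} lets us evaluate $W_{{\rm EO}(K)}(s)$ using ${\rm EO}(D)$. The plan is to track how each crossing of $D$ changes under the passage to ${\rm EO}(D)$: its EO/EU type, its sign, and its index.

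We proceed crossing by crossing. The crossings of ${\rm EO}(D)$ correspond bijectively to those of $D$, and the underlying immersion is unchanged.
\begin{itemize}
\item[(a)] A crossing $c\in{\rm EO}$ of $D$ is untouched. It stays EO with the same sign $\epsilon(c)$, so it contributes $\epsilon(c)(s^{\alpha'(c)}-1)$, where $\alpha'$ denotes the index computed in $G({\rm EO}(D))$.
\item[(b)] A crossing $d\in{\rm EU}$ of $D$ undergoes a crossing change. Over and under arcs swap, so we now reach $d$ first along the overarc and $d$ becomes EO. Its sign flips, giving $-\epsilon(d)$. In the pointed Gauss diagram, the arrow of $d$ is reversed and its symbol negated. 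Its contribution is therefore $-\epsilon(d)(s^{\alpha'(d)}-1)$.
\end{itemize}
Hence the statement reduces to the claim that $\alpha'(c)=\alpha(c)$ for every crossing $c$.

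To prove this claim, recall how the endpoint symbols are defined. The tail of an arrow carries the arrow's symbol and the head carries the opposite. A crossing change reverses the arrow and negates its symbol. The old head, carrying $-\epsilon$, becomes the new tail, which carries the new symbol $-\epsilon$. The old tail, carrying $\epsilon$, becomes the new head, which carries $-(-\epsilon)=\epsilon$. So the symbol at each point of the circle is unchanged.

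The interval cut out by each arrow also depends only on the chord and the base point $p$, not on its orientation. The signed sum of symbols on that interval is therefore the same in $G(D)$ and in $G({\rm EO}(D))$. This gives $\alpha'(c)=\alpha(c)$ for all $c$.

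Summing the contributions from (a) and (b) yields exactly the displayed formula. The only point requiring care is the invariance of the endpoint symbols under crossing change; everything else is direct bookkeeping from the definitions.
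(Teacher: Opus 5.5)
Your proposal is correct and follows the same route as the paper. A crossing change at each EU crossing flips its sign and makes it EO, while the endpoint symbols on the circle, and hence the indices, stay the same, so each $d\in{\rm EU}$ contributes $-\epsilon(d)(s^{\alpha(d)}-1)$; your explicit head/tail check only spells out the symbol-invariance step that the paper states without verification.
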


\begin{proof}
Note that the sign of a crossing is reversed if the crossing change is operated at this crossing. 
Note also that the signs on the circle do not change under crossing changes. 
Thus the index of the crossing of ${\rm EO}(D)$ is the same as that of the corresponding crossing of $D$. 
Thus the contribution of each EU crossing $d$ of $D$ to $W_{{\rm EO}(K)}(s)$ is $-\epsilon (d)\cdot (s^{\alpha(d)}-1)$. 
\end{proof}

\begin{rmk}
It can be shown that ${\rm EO}(K)={\rm EO}(K')$ holds for two long virtual knots $K$ and $K'$ if and only if $K$ and $K'$ are related by crossing changes. 
Thus for an invariant $I(\cdot)$ of long virtual knots, the invariant $I\circ {\rm EO}(\cdot)$ is an invariant of long virtual knots modulo crossing changes.
\end{rmk}

There is a concept of sieblings for long virtual knots. 
\begin{defini}(Sieblings)\\
For a long virtual knot $K$, take its long virtual knot diagram $D$. 
Let $-D$ denote the long virtual knot diagram obtained by reversing the orientation of $D$. 
Let $D^{\#}$ denote the long virtual knot diagram obtained by replacing each classical crossing of $D$ by a crossing of the opposite sign. 
Let $D^{*}$ denote the long virtual knot diagram obtained by applying a planar reflection of $\mathbb{R}^{2}$ to $D$. 
Set $-K$, $K^{\#}$ and $K^{*}$  to be long virtual knots represented by $-D$, $D^{\#}$ and $D^{*}$, respectively, and call them {\it the reverse of} $K$, {\it the vertical mirror} of $K$ and {\it the horizontal mirror} of $K$, respectively. 
\end{defini}

In \cite{yoshida}, Yoshida classified completely long virtual knots each of which admit long virtual knot diagram whose number of classical crossings are at most three as follows, where the trivial long virtual knot is a long virtual knot which is represented by a long virtual knot diagram with no classical and virtual crossings. 

\begin{fact}
Suppose that a long virtual knot $K$ is not the trivial long virtual knot $K_{0}(1)$ and that $K$ admits a long virtual knot diagram whose number of classical crossings is at most $3$. 
Then $K$ is equivalent to exactly one of the long virtual knots in Figures~\ref{computation_symmetry1}, \ref{computation_symmetry2}, \ref{computation_symmetry3}, \ref{computation_symmetry4}, \ref{computation_symmetry5}, \ref{computation_symmetry6}, \ref{computation_symmetry7}, \ref{computation_symmetry8}, \ref{computation_symmetry9}, \ref{computation_symmetry10}. 
\end{fact}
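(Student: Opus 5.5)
This Fact is Yoshida's classification \cite{yoshida}; here it is cited rather than proved. If I had to reconstruct it, I would split it into an enumeration half and a distinction half.

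\textbf{Enumeration.} I would work with pointed Gauss diagrams, since a long virtual knot diagram with at most three classical crossings is determined up to virtual moves by its pointed Gauss diagram, which has at most three signed arrows. First I would list all pointed chord diagrams on a pointed circle with one, two and three chords. Next I would attach orientations and signs to the chords. I would then discard every diagram that admits a Gauss-diagram version of a Reidemeister I or II move reducing the number of crossings. The survivors would be grouped into orbits of Reidemeister III moves and other crossing-preserving moves. This gives a finite list of candidates, which should match the long virtual knots in Figures~\ref{computation_symmetry1} through \ref{computation_symmetry10}.

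\textbf{Distinction.} I would have to show the knots in the figures are pairwise inequivalent and nontrivial. I would first apply cheap invariants:
\begin{itemize}
\item the writhe polynomial $W_K(s)$ of Fact~\ref{writhe polynomial};
\item the long-knot version of the writhe polynomial computed over ${\rm EO}\cup{\rm EU}$;
\item $W_{{\rm EO}(K)}(s)$ via Lemma~\ref{eo_writhe}.
\end{itemize}
These are immediate to compute from the Gauss diagrams. For the remaining pairs, especially siblings $-K$, $K^{\#}$, $K^{*}$, I would use a finer invariant such as biquandle colorings. The biquandle $X$ of this paper is a natural candidate, since its noncommutativity is sensitive to orientation and long-knot structure. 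The Alexander biquandle is a cheaper first try.

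\textbf{Main obstacle.} The hardest step is the distinction half for sibling pairs. Writhe-type invariants often coincide up to the substitution $s\mapsto s^{-1}$ or a sign, so they fail there. There I would compute the $X$-coloring invariant defined later in the paper on Gauss diagrams and compare the results directly.
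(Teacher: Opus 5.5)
\textbf{There is a genuine gap: the outline never establishes pairwise distinctness, and the invariant you rely on for the hard cases provably carries no information there.}

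The paper gives no proof of this Fact; it is quoted from Yoshida. So your outline has to stand on its own, and as written it is a plan rather than an argument. All of the actual content is deferred:
the enumeration of pointed Gauss diagrams with at most three arrows; the explicit moves identifying every irreducible one with a pictured knot (you only say the result ``should match'' the figures); and the table of invariant values.

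The enumeration half also cannot certify anything about distinctness. A diagram admitting no crossing-reducing R1/R2 move may still be trivial. Two diagrams in different orbits of crossing-preserving moves may still be equivalent through diagrams with more crossings. So the whole ``exactly one'' claim rests on the distinction half.

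In the distinction half, the step you single out as decisive would fail. Apply Proposition~\ref{base change} with $K=K_{0}(1)$: it gives $F_{K'}(s,u,t)=1$ for \emph{every} long virtual knot $K'$ whose closure is the trivial virtual knot. Also $W_{K'}(s)=0$ for all such $K'$, since $W$ is the writhe polynomial of the closure. This is exactly the class where genuinely long phenomena live. $K_{2}(3)$ and all its siblings have trivial closure (Remark after Theorem~\ref{commute}). So $F$ cannot even show they are nontrivial, let alone separate $K_{2}(3)^{*}$ from $K_{2}(3)^{\#*}$. These two also have the same ${\rm EO}$-projection $K_{2}(1)^{*}$, hence equal $W_{{\rm EO}(\cdot)}$ and $F_{{\rm EO}(\cdot)}$.

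More generally, knots with a common closure have $F$-values related by $F_{K'}=1+\alpha(F_{K}-1)\beta$ for units $\alpha,\beta$. So $F$ is weak precisely on pairs that differ only in the base point.

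What does separate $K_{2}(3)^{*}$ and $K_{2}(3)^{\#*}$ is an invariant that sees the base point, such as the long-knot writhe polynomial $\sum_{c\in{\rm EO}\cup{\rm EU}}\epsilon(c)(s^{\alpha(c)}-1)$. The two 2-arrow diagrams with trivial closure and ${\rm EO}$-projection $K_{2}(1)^{*}$ give $s-s^{-1}$ and $s^{-1}-s$. This is the opposite of your expectation that writhe-type invariants fail on siblings and $F$ rescues them.

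To turn the outline into a proof, you must compute a fixed set of invariants on every knot in the figures. You then have to check that this set separates all pairs and separates each knot from $K_{0}(1)$. For knots whose closures are trivial or coincide, those invariants must depend on the base point.
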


\subsection{Biquandle colorings of long virtual knot diagrams}\label{subsec_biquandle coloring}

\begin{defini}
For a biquandle $(B; \underline{\triangleright}, \overline{\triangleright})$ and a long virtual knot diagram $D$, a $(B; \underline{\triangleright}, \overline{\triangleright})-${\it coloring of } $D$  is an assignment to semiarcs (segments between classical crossings and $\pm \infty$) of $D$ with elements of $B$ satisfying the condition as each crassical crossing as in Figure~\ref{biquandle_coloring_crossing}. 
\end{defini}

\begin{figure}[htbp]
 \begin{center}
  \includegraphics[width=70mm]{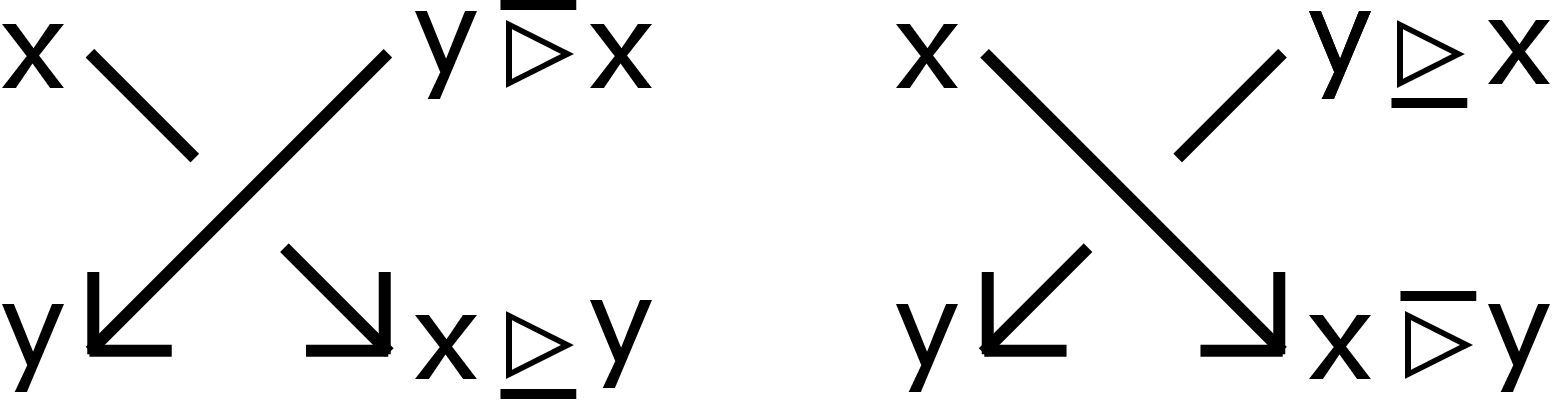}
 \end{center}
 \caption{Conditions of coloring at crossings}
 \label{biquandle_coloring_crossing}
\end{figure}

The following fact is well-known. See \cite{elhamdadi} for example. 
\begin{fact}\label{fact_inv}
Let $(B; \underline{\triangleright}, \overline{\triangleright})$ be a biquandle. 
Suppose that two long virtual knot diagrams $D$ and $D'$ are related by a finite sequence of generalized Reidemeister moves. 
Then there is the canonical bijection between the set of $(B; \underline{\triangleright}, \overline{\triangleright})-$colorings of $D$ and that of $D'$. 
Moreover, if $(B; \underline{\triangleright}, \overline{\triangleright})-$coloring $\mathcal{C}$ of $D$ is mapped to $(B; \underline{\triangleright}, \overline{\triangleright})-$coloring $\mathcal{C}'$ of $D'$ under this bijection, then the elements assigned to the initial arc (or the terminal arc)  of $D$ by $\mathcal{C}$ are the same as the elements assigned to the initial arc (or the terminal arc, respectiely)  of $D'$ by $\mathcal{C}'$. 
\end{fact}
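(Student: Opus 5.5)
The plan is to build the bijection one move at a time, since the sequence is finite and a composite of canonical bijections is again canonical. So it suffices to treat the case where $D'$ is obtained from $D$ by a single generalized Reidemeister move from Figure~\ref{generalized_reidemeister}. Such a move is supported in a disk $\Delta$ in the plane, and $D$ and $D'$ coincide outside $\Delta$. I would also use the normalization in the definition of long virtual knot diagrams (condition (1)) together with a planar isotopy. This lets me assume $\Delta$ avoids the initial and terminal rays $\{(x,0)\mid |x|>r\}$. The initial and terminal semiarcs of $D$ and $D'$ then contain these common rays, and there is a natural identification of them.

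Given a coloring $\mathcal{C}$ of $D$, define $\mathcal{C}'$ to agree with $\mathcal{C}$ on every semiarc of $D'$ that meets the complement of $\Delta$. Then show that there is exactly one way to color the semiarcs of $D'$ lying inside $\Delta$ compatibly with these boundary colors. Equivalently, the colorings of the tangle $D\cap\Delta$ and of $D'\cap\Delta$ determine the same relation between the colors on the boundary strands. I would then go through the moves case by case.
\begin{itemize}
\item For the virtual moves and the mixed move, colors simply pass through virtual crossings unchanged. Both sides therefore impose identical conditions, and the bijection is immediate.
\item For the first Reidemeister move, the colors at a kink are governed by axiom (i) together with the invertibility in (ii). Given the incoming color $x$, the loop semiarc is forced. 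Axiom (i) then makes the outgoing color equal to the incoming one, as the move requires, and invertibility gives uniqueness.
\item For the second Reidemeister move, axiom (ii) handles the same-direction version: the inner colors are obtained from the outer ones by $(\cdot\ \underline{\triangleright}\ y)$ and its inverse. Axiom (iii), the invertibility of the map $S$, handles the oppositely oriented version, where colors must be solved for from the "outgoing" pair.
\item For the third Reidemeister move, the three exchange laws in (iv) say exactly that the three output colors coincide on both sides, given the same three input colors.
\end{itemize}

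Since the colors outside $\Delta$ are untouched, the colors on the initial and terminal semiarcs are preserved. This gives the second assertion.

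The main obstacle is the number of oriented variants of the moves: several orientations of R2 and up to eight of R3. The axioms only directly check one braid-like version of each. I would therefore reduce to a generating set of oriented moves, for instance Polyak's minimal set. Where needed, I would derive the other versions by inserting R2 moves, so that the invertibility axioms (ii) and (iii) transport the verified case to the remaining ones. That orientation bookkeeping is where I expect care is required.
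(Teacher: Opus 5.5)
Your reduction to a single move supported in a disk, keeping the colors outside the disk and comparing the boundary colorings of the two tangles, is the standard argument behind the paper's citation to \cite{elhamdadi}; the paper gives no proof of its own. Your treatment of R2 and of the virtual and mixed moves is fine.

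The genuine gap is in the first Reidemeister move. Your claim that the loop color is forced by (i) and the invertibility in (ii) is not justified. At a kink the loop semiarc occupies two corners of the same crossing, so its color enters both arguments of the crossing relations. The paper's rules can be read off from Example~\ref{example_computation}: at a positive crossing, over-in $=$ (over-out) $\overline{\triangleright}$ (under-in) and under-out $=$ (under-in) $\underline{\triangleright}$ (over-out). With these rules:
\begin{itemize}
\item a positive kink traversed over-first, with loop color $z$, has incoming color $z\ \overline{\triangleright}\ z$ and outgoing color $z\ \underline{\triangleright}\ z$;
\item a positive kink traversed under-first, with incoming $x$ and outgoing $w$, gives $S(x,w)=(z,z)$.
\end{itemize}
Axiom (ii) inverts $(\cdot\ \underline{\triangleright}\ y)$ and $(\cdot\ \overline{\triangleright}\ y)$ only for a \emph{fixed} $y$. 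It therefore neither solves $z\ \overline{\triangleright}\ z=x$ nor shows $w=x$. In fact (i) and (ii) alone are insufficient. Take $B=\mathbb{Z}$ with $x\ \underline{\triangleright}\ y=x\ \overline{\triangleright}\ y=x+y$: both axioms hold, but $z\mapsto z\ \overline{\triangleright}\ z=2z$ is not onto, so the over-first kink admits no coloring with odd incoming color.

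What you need is a lemma: the diagonal map $\delta(z)=z\ \underline{\triangleright}\ z=z\ \overline{\triangleright}\ z$ is bijective, and $S^{-1}$ sends diagonal pairs to diagonal pairs. This uses (iii) and (iv).
\begin{itemize}
\item Injectivity follows from $S(z,z)=(\delta(z),\delta(z))$ and injectivity of $S$.
\item For surjectivity, given $a$ set $(x,y)=S^{-1}(a,a)$, so $y\ \overline{\triangleright}\ x=a=x\ \underline{\triangleright}\ y$. The first exchange law with $z=x$ gives $a\ \underline{\triangleright}\ a=(x\ \underline{\triangleright}\ x)\ \underline{\triangleright}\ (y\ \overline{\triangleright}\ x)=\delta(x)\ \underline{\triangleright}\ a$. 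Hence $\delta(x)=a$ by (ii).
\item Then $S(x,x)=(a,a)=S(x,y)$ forces $y=x$.
\end{itemize}
With this lemma, the over-first positive kink has the unique loop color $\delta^{-1}(x)$, and (i) then gives outgoing $=$ incoming. For the under-first positive kink, the fact that $S(x,w)$ is diagonal forces $w=x$ and $z=\delta(x)$. The negative kinks behave the same way with the roles of over-first and under-first exchanged.

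Once this lemma is inserted, your outline goes through. It would also help to say explicitly which oriented R3 the exchange laws verify directly in this convention, before you reduce the other variants to it.
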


\section{An invariant of long virtual knots defined through $(X; \underline{\triangleright}, \overline{\triangleright})-$colorings}\label{section4}

At first, we rewrite the conditions for $(X; \underline{\triangleright}, \overline{\triangleright})-$colorings as in Figure~\ref{Xcoloring} for the computation. 

\begin{figure}[htbp]
 \begin{center}
  \includegraphics[width=100mm]{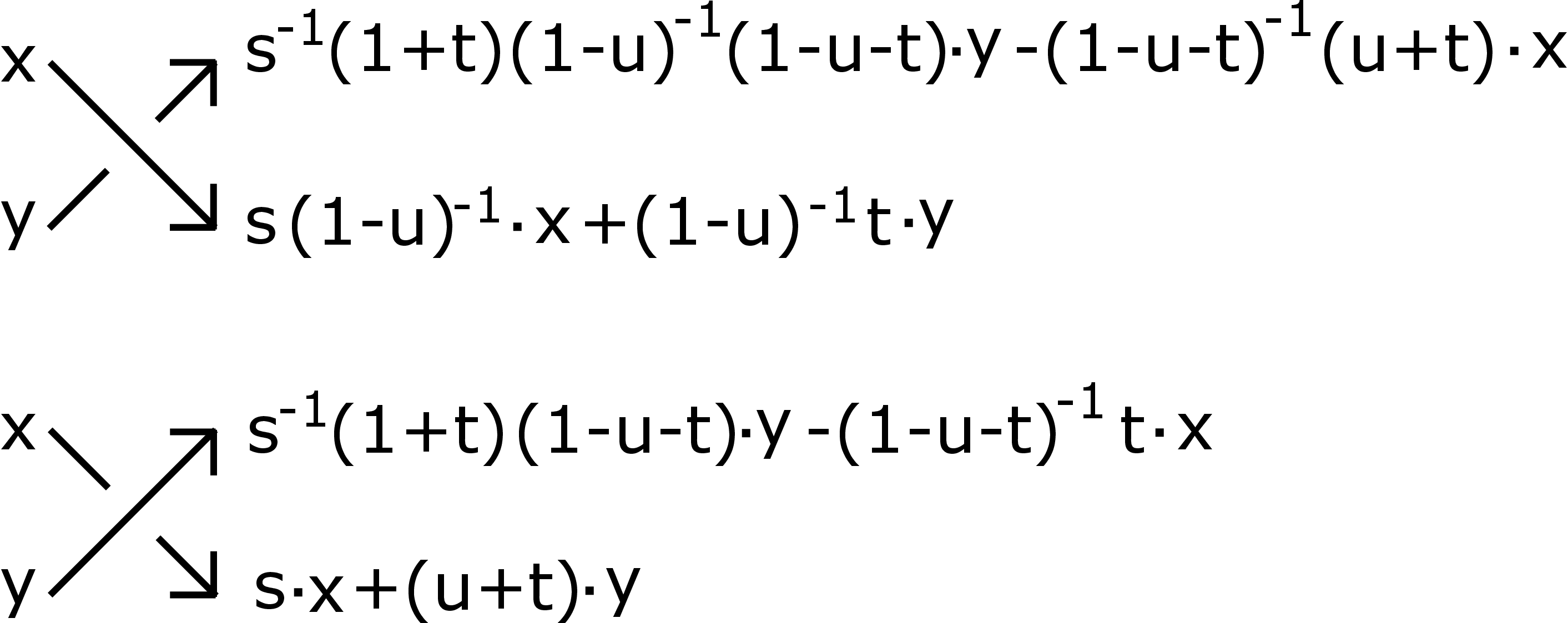}
 \end{center}
 \caption{$(X; \underline{\triangleright}, \overline{\triangleright})-$coloring at classical crossings for the computation}
 \label{Xcoloring}
\end{figure}

\begin{rmk}\label{substitute}
By substituting $t=0$ or $u=0$ on $X$, the colorings at classical crossings become as in Figure~\ref{Xcoloring_uort}. 
For the case $t=0$, $(X; \underline{\triangleright}, \overline{\triangleright})$ is equivalent to the Alexander biquandle by setting $U=s$ and $V=s^{-1}(1-u)$. 
Thus $(X; \underline{\triangleright}, \overline{\triangleright})-$coloring under $t=0$ can be regarded as an expansion of the Alexander biquandle coloring at $UV=1$. 
In \cite{mellor}, Mellor proved that the writhe polynomials of virtual knots are recovered from the generalized Alexander polynomials (which can be defined through the Alexander biquandle). 
To be precise, the first term of the generalized Alexander polynomial determines the writhe polynomial when the generalized Alexander polynomial is represented as the power series of $(1-UV)$. 
In \cite{mellor}, the higher writhe polynomials are also defined through the higher term of $(1-UV)$ in the generalized Alexander polynomials. 
For the case $u=0$, the coloring at each classical crossing is independent of the sign of the crossing. 
Thus $(X; \underline{\triangleright}, \overline{\triangleright})-$coloring under $u=0$ is a coloring for (long) flat virtual knots.

\begin{figure}[htbp]
 \begin{center}
  \includegraphics[width=80mm]{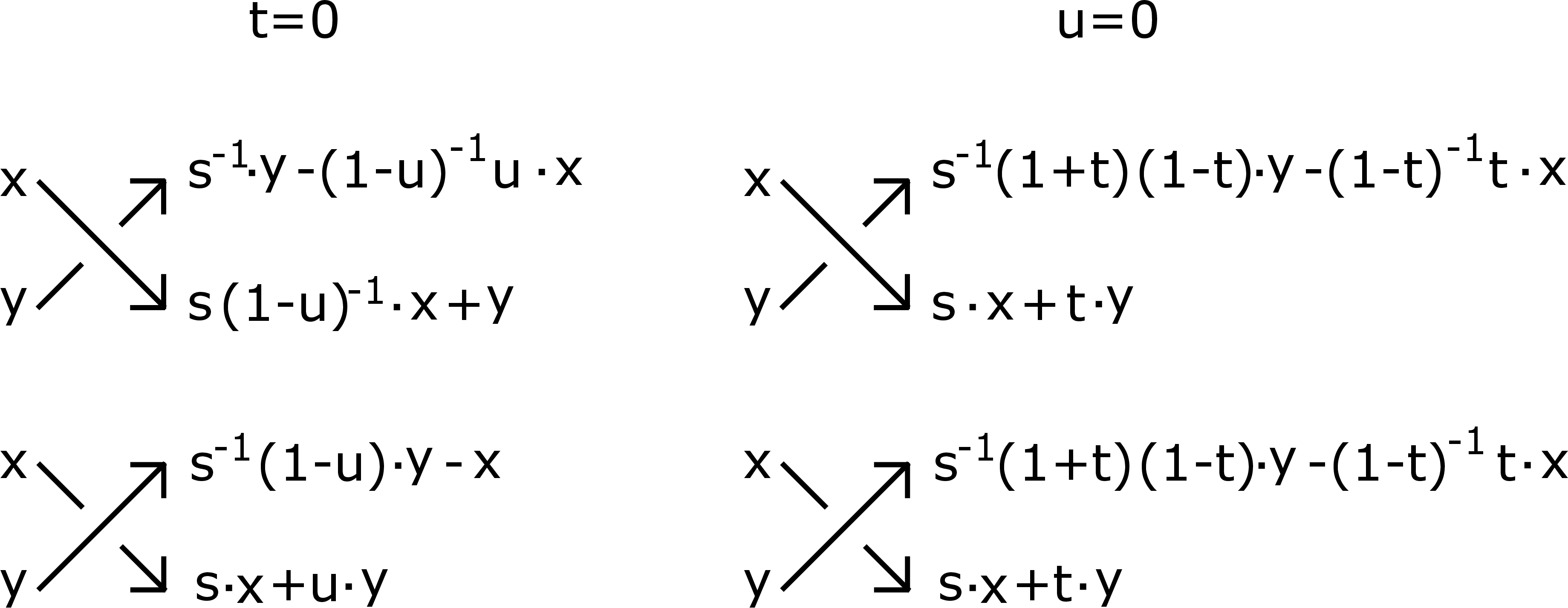}
 \end{center}
 \caption{$(X; \underline{\triangleright}, \overline{\triangleright})-$coloring at classical crossings \ \ Left: $t=0$ \ \ \ Right: $u=0$}
 \label{Xcoloring_uort}
\end{figure}

\end{rmk}

\subsection{Construction of our invariant}

The definition of our invariant relies on the following theorem. 
\begin{thm}\label{def_diagram}
Let $D$ be a long virtual knot diagram. 
Take arbitrary $a\in X$. 
Then there exists  unique $(X; \underline{\triangleright}, \overline{\triangleright})-$coloring $\mathcal{C}_{a}$ of $D$ such that the element assigned to the initial semiarc of $D$ is $a$. 
Moreover, there exists an element $F_{D}(s,u,t)\in X$, which is independent of $a$, such that the element assigned to the terminal semiarc of $D$ by $\mathcal{C}_{a}$ is $F_{D}(s,u,t)\cdot a$. 
\end{thm}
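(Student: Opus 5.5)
We will prove Theorem~\ref{def_diagram} by setting up the coloring conditions as a linear system over $X$ and showing that this system has a unique solution. The key structural input is that each coloring condition in Figure~\ref{Xcoloring} is left-linear: every color is a left $X$-combination of other colors. The coefficients lie in $\{1, u, t, u+t, 1-u\}$ times powers of $s$.

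Let $D$ have $n$ classical crossings. Traversing $D$ from $-\infty$ to $+\infty$, the semiarcs are $a_0, a_1, \dots, a_{2n}$, where $a_0$ is the initial semiarc and $a_{2n}$ the terminal one. At the $j$-th passage through a crossing, the outgoing semiarc $a_j$ is determined by the incoming semiarc $a_{j-1}$ together with the incoming semiarc of the other strand at that crossing. Using the formulas for $\underline{\triangleright}$ and $\overline{\triangleright}$ and their inverses from condition (ii), each crossing contributes a relation of the form
\[ a_j = \lambda_j a_{j-1} + \mu_j a_{k}, \]
where $\lambda_j$ is a unit (namely $s^{\pm1}$ or $(1-u)^{-1}s$, etc.) and $\mu_j \in F^1$ (a multiple of $u+t$, $t$, or $(1-u)^{-1}t$). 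The proof does not need the precise form of each relation, only the facts that $\lambda_j$ has degree-$0$ part $\pm s^{m}$ and that $\mu_j$ lies in $F^1$. These facts should be confirmed case by case from Figure~\ref{Xcoloring} for both crossing signs and for the over and under strands.

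Writing the unknowns as $a = (a_1, \dots, a_{2n})$, the system becomes $(I - N)a = a_0 e_1$, where $N$ is a matrix over $X$. We split $N = L + M$, with $L$ holding the subdiagonal entries $\lambda_j$ and $M$ holding the entries $\mu_j$ of degree $\geq 1$. Since $I - L$ is lower unitriangular, it is invertible. Then $I - N = (I - L)\bigl(I - (I - L)^{-1}M\bigr)$, and $(I - L)^{-1}M$ has all entries in $F^1$. The Neumann series $\sum_k \bigl((I - L)^{-1}M\bigr)^k$ therefore converges in the $F$-adic topology, because $X$ is complete: $R$ is degree-complete and $J$ is closed. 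This convergence gives both existence and uniqueness of the solution.

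The solution depends left-linearly on $a_0$, that is, $a = P e_1 a_0$ with $P = (I - N)^{-1}$. Since every relation multiplies colors on the left, the solution has the form $a_j = c_j a_0$ with $c_j \in X$ independent of $a_0$. We then set $F_D = c_{2n}$. Alternatively, uniqueness alone can be argued degree by degree: if $a_0 = 0$, then an induction shows every $a_j$ lies in $F^k$ for all $k$, hence $a_j = 0$. This requires that $\bigcap_k F^k = 0$, which follows from Proposition~\ref{prop_base}.

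The main obstacle is the bookkeeping step: verifying from Figure~\ref{Xcoloring} that, at every crossing type, the outgoing color along the traversal can be written as a unit times the previous color plus an $F^1$-multiple of another color. The completeness and separatedness of $X$ also need care. Completeness of the quotient by the closed ideal $J$ is standard, and separatedness was essentially established in Proposition~\ref{prop_base}.
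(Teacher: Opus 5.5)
Your proof is correct, but it inverts the coefficient matrix by a different mechanism than the paper.

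The paper keeps the initial color as an unknown ($x_1=a$) and runs explicit Gaussian elimination row by row. It maintains the invariant that the $i$-th equation has a unit coefficient on $x_i$ and coefficients in $F^1$ away from $x_{i-1},x_i$, normalizes each pivot via Lemma~\ref{unit}, and ends with an upper unitriangular system solved by back substitution.

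You instead factor $I-N=(I-L)\bigl(I-(I-L)^{-1}M\bigr)$. You invert $I-L$ by nilpotency and the second factor by a Neumann series in the matrix ring over $X$. Your route is shorter and more conceptual. It proves in one stroke that a square matrix over $X$ which is lower unitriangular modulo $F^1$ is invertible, and this gives existence, uniqueness and left-linearity in $a_0$ together. It also shows that only $\mu_j\in F^1$ matters. The unit property of the $\lambda_j$ is never used, since $I-L$ is invertible for any strictly lower triangular $L$. So the case check you flag as the main obstacle is lighter than you suggest.

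The paper's elimination, in turn, is an explicit algorithm (used in Example~\ref{example_computation}) that only ever inverts single elements of $X$. Both arguments rest on completeness and separatedness of $X$; the paper uses them implicitly through the geometric series in Lemma~\ref{unit}.

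One small slip: after moving $a_0$ to the right-hand side, the system reads $(I-N)a=b\,a_0$, not $a_0e_1$. Here $b$ has $\lambda_1$ in its first entry and $\mu_j$ in the entry of the other passage $j$ through the first crossing, whose other-strand input is $a_0$. This changes nothing, since $a=(I-N)^{-1}b\,a_0$ is still left-linear in $a_0$.
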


\begin{proof}
Let $n$ be the number of classical crossings of $D$. 
We give semiarcs of $D$ variables $x_{1}, x_{2},\dots, x_{2n+1}$ in order, from the initial semiarc to the terminal semiarc. 
We construct $2n+1$ equations as follows: 
The first equation is $x_{1}=a$. 
For $2\leq i\leq 2n+1$, the $i-$th equation is as in Figure~\ref{crossing_equation} depending on the types of classical crossings. 
In any cases of Figure~\ref{crossing_equation}, note that $x_{i}$ and $x_{j-1}$ can be same, that $x_{i}$ and $x_{i-1}$ cannot be same, that $x_{i}$ and $x_{i-1}$ cannot be $x_{i+1}$, and thus that the following holds:
\begin{itemize}
\item In the left hand side of the first equation, the coefficient of $x_1$ is $1$, and the coefficients of the other variables are $0$, and thus in $F^{1}$, and
\item in the left hand side of the $i-$th equation, the coefficient of $x_{i}$ is a unit element of $X$ (even if $x_{j-1}=x_{i}$ by Lemma~\ref{unit}), and the coefficients of the variables other than $x_{i-1}$ and $x_{i}$ are in $F^{1}$ for each $2\leq i\leq 2n+1$. 
\end{itemize} 
The set of $(X; \underline{\triangleright}, \overline{\triangleright})-$colorings of $D$ whose assignment to the initial semiarc are $a$ corresponds to the set of solutions of this simaltaneous $2n+1$ linear equations with coefficients $X$ with $2n+1$ valuables. 
We solve this by using the Gaussian elimination. See Example~\ref{example_computation}. 
We perform the following operation inductively: 
For $1\leq k\leq 2n$, suppose that 
\begin{itemize}
\item In the left hand side of the $k-$th equation, the coefficient of $x_k$ is $1$, and the coefficients of the other variables are in $F^{1}$, 
\item in the left hand side of the $i-$th equation, the coefficient of $x_{i}$ is a unit element of $X$, and the coefficients of the variables other than $x_{i-1}$ and $x_{i}$ are in $F^{1}$ for each $k+1\leq i\leq 2n+1$, and
\item in the $j-th$ equation for each $k\leq j\leq 2n+1$, there are no $x_{l}$ for $l<k$. 
\end{itemize} 
Then eliminate $x_{k}$ from the $i$-th equations for $k+1\leq i\leq 2n+1$ by using the $k-$th equation. 
Note that after this operation, the coefficient of $x_{k+1}$ is unit, and the coefficients of the other variables are in $F^{1}$ in the left hand side of the $(k+1)-$th equation, that the coefficient of $x_{i}$ is a unit element of $X$, and the coefficients of the variables other than $x_{i-1}$ and $x_{i}$ are in $F^{1}$ in the left hand side of the $i-$th equation, for each $k+2\leq i\leq 2n+1$, and that in the $j-th$ equation for each $k+1\leq j\leq 2n+1$, there are no $x_{l}$ for $l<k$. 
Then multiply some element from the left to the $(k+1)-$th equation so that the coefficient of $x_{k+1}$ in the left hand side of the $(k+1)-$th equation becomes to be $1$. 
This complete the inductive step. 
Finally, we get the unique solution for our equation. 
Moreover, by construction, the solution for $x_{2n+1}$ is of the form of $F_{D}(s,u,t)\cdot a$, where $F_{D}(s,u,t)$ is an element of $X$ which is independent of $a$.

\begin{figure}[htbp]
 \begin{center}
  \includegraphics[width=120mm]{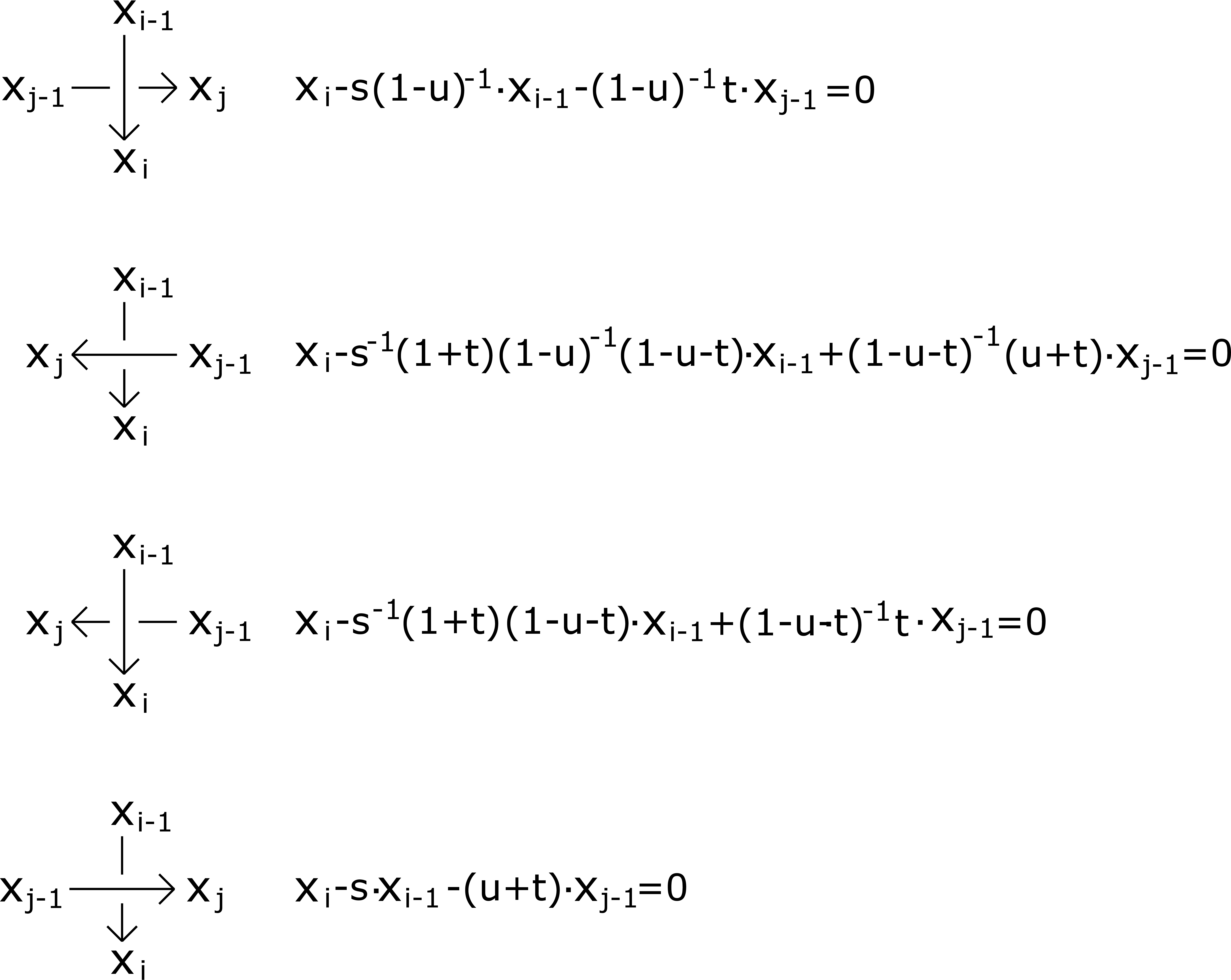}
 \end{center}
 \caption{The $i-$th equations for the types of crossings}
 \label{crossing_equation}
\end{figure}

\end{proof}

By the above, we define our invariant. 
\begin{defini}\label{defini_inv}
For a long virtual knot $K$, take a long virtual knot diagram $D$ representing $K$. 
Then we define $F_{K}(s,u,t)$ to be an element $F_{D}(s,u,t)$ of $X$. 
By Fact~\ref{fact_inv}, this is independent of the choice of $D$, therefore well-defined. 
\end{defini}

\begin{example}\label{example_computation}
We will review a process for the computation. 
Consider the long virtual knot $K_{2}(1)$ in Figure~\ref{k2(1)}. 

\begin{figure}[htbp]
 \begin{center}
  \includegraphics[width=40mm]{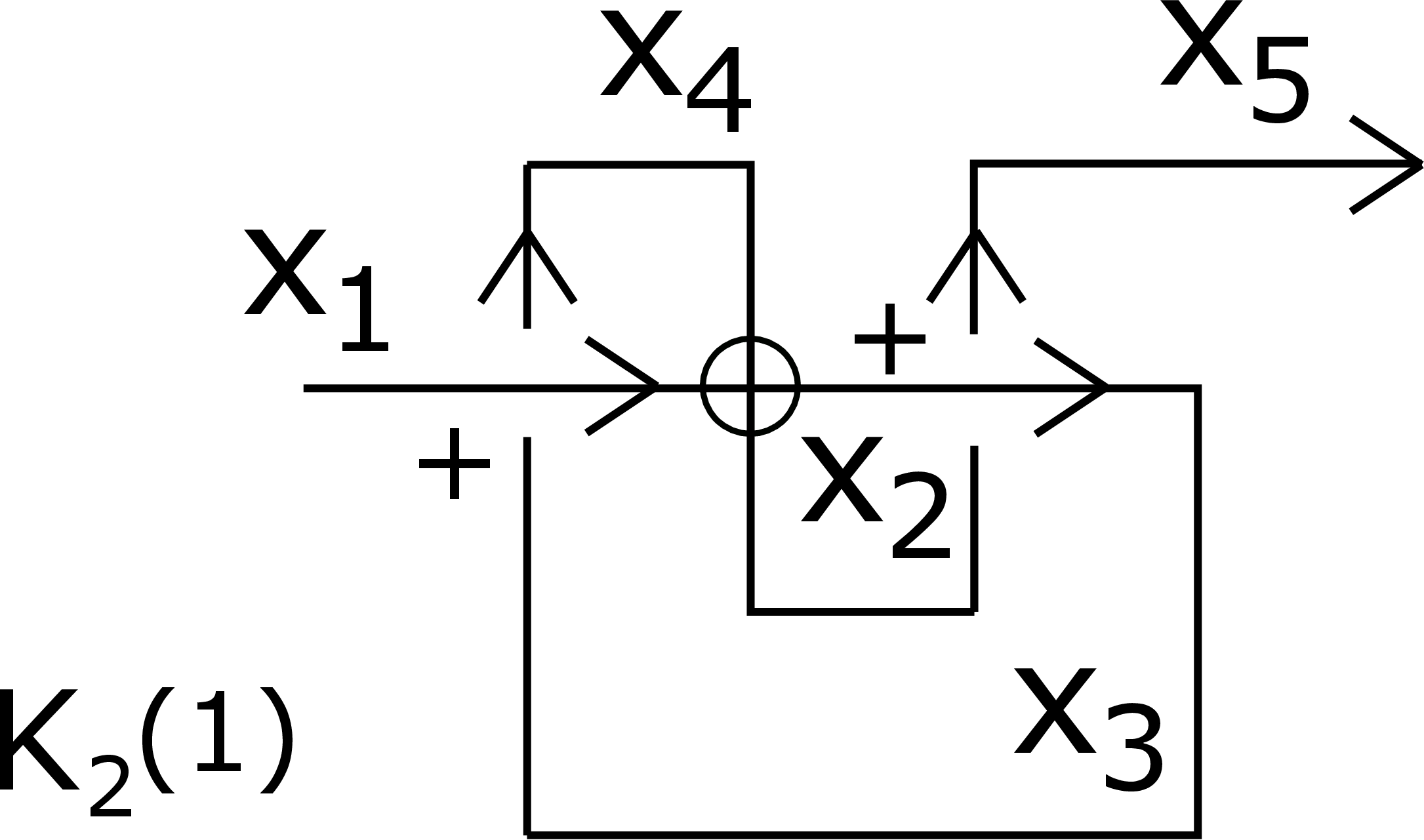}
 \end{center}
 \caption{The long virtual knot $K_{2}(1)$}
 \label{k2(1)}
\end{figure}

The linear equations are following: 
\begin{align}
  &\begin{cases}
    x_{1} = a \\
    x_{2}-s(1-u)^{-1}x_{1}-(1-u)^{-1}tx_{3} = 0\\
    x_{3}-s(1-u)^{-1}x_{2}-(1-u)^{-1}tx_{4} = 0\\
    x_{4}-s^{-1}(1+t)(1-u)^{-1}(1-u-t)x_{3}+(1-u-t)^{-1}(u+t)x_{1}=0\\
    x_{5}-s^{-1}(1+t)(1-u)^{-1}(1-u-t)x_{4}+(1-u-t)^{-1}(u+t)x_{2}=0
  \end{cases} \notag 
\end{align}
Set $A$, $B$, $C$, $D$ to be $s(1-u)^{-1}$, $(1-u)^{-1}t$, $s^{-1}(1+t)(1-u)^{-1}(1-u-t)$, $-(1-u-t)^{-1}(u+t)$, respectively due to the space limitation. 
Note that $A$ and $C$ are unit elements of $X$ and that $B$ and $D$ are in $F^{1}$.

\begin{align}
&\left(
  \begin{array}{ccccc|@{\quad}c} 
    1 & 0& 0 & 0 & 0 & a\\
    -A & 1 & -B & 0 & 0 & 0\\
    0 & -A & 1 & -B & 0 & 0\\
    -D & 0 & -C & 1 & 0 & 0\\
    0 & -D & 0 & -C & 1 & 0
  \end{array}
\right) \notag \\ 
\notag \\
\Longleftrightarrow & 
\left(
  \begin{array}{ccccc|@{\quad}c} 
    1 & 0& 0 & 0 & 0 & a\\
    0 & 1 & -B & 0 & 0 & Aa\\
    0 & -A & 1 & -B & 0 & 0\\
    0 & 0 & -C & 1 & 0 & Da\\
    0 & -D & 0 & -C & 1& 0
  \end{array}
\right) 
 \notag \\ 
 \notag \\
\Longleftrightarrow &\left(
  \begin{array}{ccccc|@{\quad}c} 
    1 & 0& 0 & 0 & 0 & a\\
    0 & 1 & -B & 0 & 0 & Aa\\
    0 & 0 & 1-AB & -B & 0 & A^{2}a\\
    0 & 0 & -C & 1 & 0 & Da\\
    0 & 0 & -DB & -C & 1 & DAa
  \end{array}
\right) \notag 
\end{align} 

\begin{align}
\Longleftrightarrow &
\left(
  \begin{array}{ccccc|@{\quad}c} 
    1 & 0& 0 & 0 & 0 & a\\
    0 & 1 & -B & 0 & 0 & Aa\\
    0 & 0 & 1 & -(1-AB)^{-1}B & 0 & (1-AB)^{-1}A^{2}a\\
    0 & 0 & -C & 1 & 0 & Da\\
    0 & 0 & -DB & -C & 1 & DAa
  \end{array}
\right) 
 \notag \\ 
\notag \\
\Longleftrightarrow &
\left(
  \begin{array}{ccccc|@{\quad}c} 
    1 & 0& 0 & 0 & 0 & a\\
    0 & 1 & -B & 0 & 0 & Aa\\
    0 & 0 & 1 & -(1-AB)^{-1}B & 0 & (1-AB)^{-1}A^{2}a\\
    0 & 0 & 0 & 1-C(1-AB)^{-1}B & 0 & \left(D+C(1-AB)^{-1}A^{2}\right)a\\
    0 & 0 & 0 & -C-DB(1-AB)^{-1}B & 1 & \left(DA+DB(1-AB)^{-1}A^{2}\right) a
  \end{array}
\right)
 \notag \\ 
\notag \\
\Longleftrightarrow &
\left(
  \begin{array}{ccccc|@{\quad}c} 
    1 & 0& 0 & 0 & 0 & a\\
    0 & 1 & -B & 0 & 0 & Aa\\
    0 & 0 & 1 & -(1-AB)^{-1}B & 0 & (1-AB)^{-1}A^{2}a\\
    0 & 0 & 0 & 1 & 0 &\left( 1-C(1-AB)^{-1}B\right)^{-1} Ea\\
    0 & 0 & 0 & -C-DB(1-AB)^{-1}B & 1 & F a
  \end{array}
\right) 
 \notag \\
& \notag \\
& \ \  \text{, where $E=D+C(1-AB)^{-1}A^{2}$ and $F=DA+DB(1-AB)^{-1}A^{2}$} \notag 
\end{align}
\begin{align}
\Longleftrightarrow &
\left(
  \begin{array}{ccccc|@{\quad}c} 
    1 & 0& 0 & 0 & 0 & a\\
    0 & 1 & -B & 0 & 0 & Aa\\
    0 & 0 & 1 & -(1-AB)^{-1}B & 0 & (1-AB)^{-1}A^{2}a\\
    0 & 0 & 0 & 1 & 0 &\left( 1-C(1-AB)^{-1}B\right)^{-1} Ea\\
    0 & 0 & 0 & 0& 1 & G a
  \end{array}
\right) 
 \notag \\ 
 & \notag \\
& \ \  \text{, where $G=\left(F+\left(C+DB(1-AB)^{-1}B\right)\left( 1-C(1-AB)^{-1}B\right)^{-1} E \right)$} \notag 
\end{align}
Then we get $F_{K_{2}(1)}(s,u,t)$ is $G$. 

\end{example}

\subsection{Some properties of $F_{K}(s,u,t)$}

By Proposition~\ref{prop_base}, we can represent $F_{K}(s,u,t)$ as $F_{K}(s,u,t)=\sum \limits_{w\in W(u,t)} f_{K,w}(s)\cdot w$ uniquely, where $f_{K,w}(s)$'s are elements of $\mathbb{Z}[s^{\pm1}]$ and $W(u,t)$ is a set defined at Definition~\ref{def_base}. 
We can compute $f_{K,\varnothing}(s)$, $f_{K,u}(s)$ and $f_{K,t}(s)$ easily as in the next two propositions. 

\begin{prop}
The equation $f_{K,\varnothing}(s)=1$ holds for every long virtual knot $K$. 
In other word, $F_{K}(s,0,0)=1$. 
\end{prop}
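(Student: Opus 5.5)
The plan is to reduce everything modulo $F^{1}$, that is, to work in the quotient $X/F^{1}$. By Proposition~\ref{prop_base} and its proof, $X/F^{1}\cong \mathbb{Z}[s^{\pm1}]$, and the projection $X\to X/F^{1}$ sends $F_{K}(s,u,t)=\sum_{w} f_{K,w}(s)\cdot w$ to $f_{K,\varnothing}(s)$. This projection is a ring homomorphism that kills $u$ and $t$. Its target is commutative, which is consistent, since the relation $sts^{-1}=(1-u)t(1+t)^{-1}$ becomes $0=0$ there.

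The key steps are as follows.

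First, I push the biquandle operations through the projection. Under it, $x\ \underline{\triangleright}\ y\mapsto s^{-1}\bar x$ and $x\ \overline{\triangleright}\ y\mapsto s^{-1}\bar x$. Equivalently, in Figure~\ref{Xcoloring} the coefficients $(u+t)$, $t$, $(1-u-t)^{-1}(u+t)$ and so on vanish, and the unit coefficients such as $s(1-u)^{-1}$ and $s^{-1}(1+t)(1-u)^{-1}(1-u-t)$ reduce to $s^{\pm1}$. So the projection of the coloring $\mathcal{C}_{a}$ is a $\mathbb{Z}[s^{\pm1}]$-valued coloring in which, at each classical crossing, each outgoing semiarc's color is $s^{-1}$ times the color of the incoming semiarc on the same strand. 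Running the same relations backwards, the incoming color is $s$ times the outgoing one.

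Second, I track the colors along $D$ from the initial semiarc to the terminal one. At each classical crossing the strand is traversed once as overarc and once as underarc, so that crossing is passed exactly twice.

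The hard part is the bookkeeping of the exponent. I must check, using the orientation conventions in Figure~\ref{biquandle_coloring_crossing}, whether following the strand forward through a crossing multiplies the color by $s^{-1}$ or by $s$. The answer depends on whether the strand passes as the left or the right input. I expect the two passages of each crossing to contribute inverse factors, one $s$ and one $s^{-1}$, in the way the index $\alpha(c)$ is built from paired endpoint signs. The total factor would then be $s^{0}=1$. If instead every passage contributed $s^{-1}$, the total would be $s^{-2n}$, and this is impossible: Reidemeister~I changes $n$ while $F_{K}$ is an invariant by Definition~\ref{defini_inv}. So the correct bookkeeping must give cancelling contributions, and I would verify this directly from the Figure~\ref{Xcoloring} relations for positive and negative crossings.

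A cleaner route avoids the bookkeeping entirely. Modulo $F^{1}$ the equations from the proof of Theorem~\ref{def_diagram} have unit diagonal entries and zero entries elsewhere except the $x_{i-1}$ entries. So $\bar x_{i}=\sigma_{i}\bar x_{i-1}$ with $\sigma_{i}=s^{\pm1}$, read off from Figure~\ref{crossing_equation}. In Example~\ref{example_computation}, $A\mapsto s$ and $C\mapsto s^{-1}$, with each crossing giving one $A$-type equation and one $C$-type equation, so the product telescopes to $1$. I would confirm from Figure~\ref{crossing_equation} that in every crossing type the under-passage equation and the over-passage equation carry reciprocal factors. Then $f_{K,\varnothing}(s)=\prod_i \sigma_i=1$.
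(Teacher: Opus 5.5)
Your plan is the paper's own argument: reduce modulo $F^{1}$, note that each passage through a classical crossing multiplies the colour by $s^{\pm1}$, and cancel the two passages through each crossing. However, the cancellation is the whole content of the proposition, and you only announce it (``I expect'', ``I would confirm''). As written, there is a gap.

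The paper fills it by reading the reduced rules off Figure~\ref{Xcoloring}. Modulo $F^{1}$, a colour is multiplied by $s$ when it passes over a positive or under a negative crossing, and by $s^{-1}$ when it passes under a positive or over a negative crossing. Since every crossing is passed once over and once under, the product is $1$.

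Two details in what you wrote need fixing:
\begin{itemize}
\item Your first step is wrong if ``incoming/outgoing'' refers to the orientation of $D$: it then says every passage gives $s^{-1}$, which is exactly the $s^{-2n}$ scenario you later reject. It is correct only for biquandle inputs/outputs. The fact you are missing is that at each crossing exactly one strand is traversed from the input side to the output side.
\item Your check via Example~\ref{example_computation} ($A\mapsto s$, $C\mapsto s^{-1}$) covers only positive crossings, since both crossings of $K_{2}(1)$ are positive. Negative crossings remain unverified in your ``cleaner route''.
\end{itemize}

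Your Reidemeister~I remark excludes only the alternative in which every passage gives $s^{-1}$. So ``the correct bookkeeping must give cancelling contributions'' does not follow from it. The remark can, however, be turned into a complete and genuinely different proof:
\begin{itemize}
\item Modulo $F^{1}$, the factor at a passage depends only on the sign of the crossing and on whether the passage is over or under.
\item Hence $f_{K,\varnothing}(s)=s^{m_{+}p+m_{-}q}$, where $p$ and $q$ are the numbers of positive and negative crossings of $D$ and $m_{\pm}\in\{0,\pm2\}$.
\item A Reidemeister~I move adds one crossing of either chosen sign without changing the other crossings.
\item Invariance of $F_{K}$ (Definition~\ref{defini_inv}) therefore forces $m_{+}=m_{-}=0$.
\end{itemize}
This replaces the paper's direct local computation with an appeal to invariance. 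It needs no information from the figure beyond the fact that the colouring rule at a crossing depends only on its sign.
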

\begin{proof}
Take a long virtual knot diagram $D$ representing $K$. 
Consider the conditions on $X/F^{1}$ in Figure~\ref{Xcoloring}. 
A semiarc is multipled by $s$ if it passes over (or under) a positive (or negative, respectively) crossing, and is multipled by $s^{-1}$ if it passes over (or under) a negative (or positive) crossing. 
In term of the pointed Gauss diagram, semiarcs is multipled by $s$ or $s^{-1}$ if it passed the endpoint of a dashed arrow with symbol $+$ or $-$, respectively. 
During traveling $D$, each classical crossing is passed twice, once over and the other under. 
Thus the terminal semiarc is obtained by multiplying $1$ from the initial semiarc. 
\end{proof}

\begin{prop}\label{deg1}
The equations $f_{K,u}(s)=-W_{K}(s^{-1})$ and $f_{K,t}(s)=f_{K,u}(s)-f_{K,u}(s^{-1})=W_{K}(s)-W_{K}(s^{-1})$ hold for every long virtual knot $K$, where $W_{K}(s)\in \mathbb{Z}[s^{\pm1}]$ is a polynomial defined at Fact~\ref{writhe polynomial}.  
\end{prop}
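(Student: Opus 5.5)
The plan is to compute $F_{K}(s,u,t)$ modulo $F^{2}$, by a first-order perturbation of the coloring along the diagram.

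First we identify $X/F^{2}$. Modulo $F^{2}$ we have $(1-u)t(1+t)^{-1}\equiv t$, so the defining relations become $su=us$ and $sts^{-1}\equiv t$. By the argument of Proposition~\ref{prop_base}, $X/F^{2}$ is then the free $\mathbb{Z}[s^{\pm1}]$-module on $1,u,t$, and $u,t$ commute with $s$. In particular $f_{K,u}(s)$ and $f_{K,t}(s)$ are just the coefficients of $u$ and $t$ in $F_{D}$ read in this commutative quotient.

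Next, take a diagram $D$ and the coloring $\mathcal{C}_{1}$ of Theorem~\ref{def_diagram} with $a=1$, and write each semiarc color as $x_{i}=x_{i}^{(0)}+x_{i}^{(1)}$ with $x_{i}^{(1)}\in F^{1}/F^{2}$. The degree-$0$ part is known from the proof that $f_{K,\varnothing}=1$. We have $x_{i}^{(0)}=s^{m_{i}}$, where $m_{i}$ is the signed count of symbols on the pointed Gauss diagram passed before semiarc $i$. Passing a crossing multiplies the degree-$0$ part by $s^{\pm1}$.

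We then linearize the crossing rules of Figure~\ref{Xcoloring}, i.e.\ the formulas $x\,\underline{\triangleright}\,y=s^{-1}x-s^{-1}(u+t)y$ and $x\,\overline{\triangleright}\,y=s^{-1}(1-u)x-s^{-1}ty$ and their inverses from part (ii) of the biquandle theorem. At each passage through a crossing, $x_{i+1}^{(1)}=s^{\pm1}x_{i}^{(1)}+(\text{correction})$. The correction is a $\mathbb{Z}$-combination of $u\cdot s^{m_{i}}$ (own strand) and $t\cdot s^{m_{j}}$ or $(u+t)\cdot s^{m_{j}}$ (other strand), with signs depending on the crossing type. Because the degree-$0$ transport is a pure power of $s$, the correction created at a passage of $c$ is transported to the terminal arc by multiplying by $s^{-m_{i+1}}$. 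Hence
\begin{align}
F_{D}\equiv 1+\sum_{\text{passages}} s^{-m_{i+1}}\cdot(\text{correction at the passage}) \pmod{F^{2}}. \notag
\end{align}
For a crossing $c$, the exponents $m$ at its over-passage and its under-passage differ by $\pm\alpha(c)$, since the interval cut out by the arrow carries exactly the symbols counted by the index. Whether the over-passage comes first (EO) or second (EU) decides whether this difference enters as $s^{\alpha(c)}$ or $s^{-\alpha(c)}$.

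We will then do the case check over the four crossing types (sign $\pm$, EO/EU). For each crossing, the total $u$-contribution should come out as $-\epsilon(c)(s^{-\alpha(c)}-1)$ when $c\in{\rm EO}$ and $-\epsilon(c)(s^{\alpha(c)}-1)$ when $c\in{\rm EU}$. The constant $-1$ comes from the own-strand $u$ terms, which cancel against the transport factor; the other comes from the cross term. Summing gives $f_{K,u}(s)=-W_{K}(s^{-1})$ by Fact~\ref{writhe polynomial}.

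For the $t$-coefficient, the $t$-terms at a crossing involve only the color of the other strand, and each crossing contributes at both passages. One passage gives the same monomial as the $u$-cross-term; the other gives its inverse in $s$ with the opposite sign, and the constants cancel. This yields the per-crossing contribution $\epsilon(c)(s^{\pm\alpha(c)}-s^{\mp\alpha(c)})$, hence $f_{K,t}(s)=f_{K,u}(s)-f_{K,u}(s^{-1})$.

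The main obstacle is this bookkeeping: matching the sign conventions of Figure~\ref{Xcoloring} with the orientation of the Gauss diagram and the definition of $\alpha(c)$. As a sanity check I would verify the formulas first on a one-crossing example and on $K_{3}(4)$.
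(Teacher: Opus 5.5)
Your proposal is correct and is essentially the paper's proof. The paper likewise works in the commutative quotient $X/F^{2}$ and colors the initial arc by $1$. It writes each color as $s^{k}(1+Au+Bt)$, which is the same normalization as your transport factor $s^{-m_{i+1}}$. It then runs through the eight cases (crossing sign, EO/EU, over/under), using the index $\alpha(c)$ to relate the degree-$0$ exponents at the two passages. This produces exactly the per-crossing contributions you predict: $\epsilon(c)(1-s^{\mp\alpha(c)})$ to $f_{K,u}$ and $\pm\epsilon(c)(s^{\alpha(c)}-s^{-\alpha(c)})$ to $f_{K,t}$, with the upper sign for EO and the lower for EU.
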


\begin{proof}
We consider with $X/F^{2}$ i.e. ignore parts whose degrees are grater than one. 
Note that $X/F^{2}$ is commutative. 
In $X/F^{2}$, the coloring rules at each classical crossing are as in Figure~\ref{Xcoloring_deg1}. 

\begin{figure}[htbp]
 \begin{center}
  \includegraphics[width=80mm]{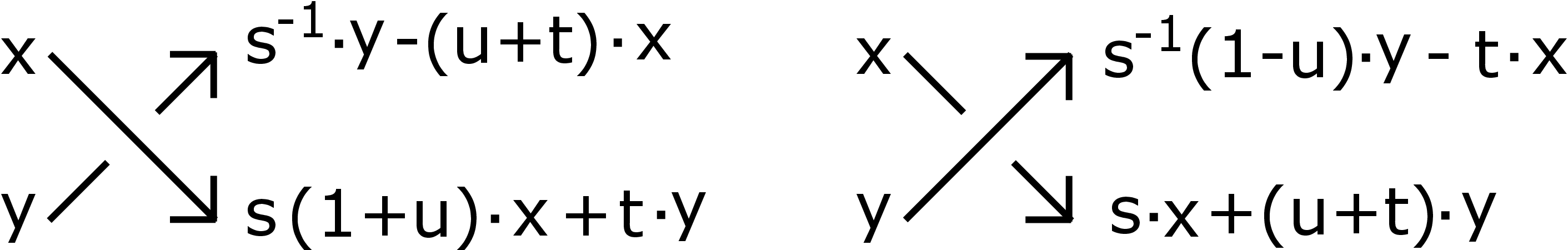}
 \end{center}
 \caption{$(X; \underline{\triangleright}, \overline{\triangleright})-$coloring at classical crossings up to degree one}
 \label{Xcoloring_deg1}
\end{figure}

For a long virtual knot diagram $K$, take the $X/F^{2}-$coloring such that the assignment of the initial arc is $1\in X/F^{2}$. 
Research the change of the assignment of the arc under passing a crossing $c$. 
Note that the degree zero part of the assignment of each arc is $s^{k}$ for some integer $k$. 
Suppose that the assignment of the arc $a$ before passing a crossing $c$ is $s^{k}(1+Au+Bt)$ for $k\in \mathbb{Z}$ and $A,B\in \mathbb{Z}[s^{\pm1}]$.  
We divide into eight cases. See Figure~\ref{eightcases}.

(1) The case where $\epsilon(c)=1$ and ${c\in \rm EO}$ and $a$ is overarc \\
In this case, the degree zero part of the underarc before passing $c$ is $s^{k+1+\alpha(c)}$. 
Then the assignment of the arc next to $a$ is $s(1+u)\cdot s^{k}(1+Au+Bt)+t\cdot s^{k+1+\alpha(c)}=s^{k+1}\left(1+(A+1)u+(B+s^{\alpha(c)})t\right)$. 

(2) The case where $\epsilon(c)=1$ and ${c\in \rm EO}$ and $a$ is underarc \\
In this case, the degree zero part of the overarc before passing $c$ is $s^{k-1-\alpha(c)}$. 
Then the assignment of the arc next to $a$ is $s^{-1}\cdot s^{k}(1+Au+Bt)-(u+t)\cdot s^{k-1-\alpha(c)}=s^{k-1}\left(1+(A-s^{-\alpha(c)})u+(B-s^{-\alpha(c)})t\right)$. 

(3) The case where $\epsilon(c)=-1$ and ${c\in \rm EO}$ and $a$ is underarc \\
In this case, the degree zero part of the overarc before passing $c$ is $s^{k+1-\alpha(c)}$. 
Then the assignment of the arc next to $a$ is $s\cdot s^{k}(1+Au+Bt)+(u+t)\cdot s^{k+1-\alpha(c)}=s^{k+1}\left(1+(A+s^{-\alpha(c)})u+(B+s^{-\alpha(c)})t\right)$. 

(4) The case where $\epsilon(c)=-1$ and ${c\in \rm EO}$ and $a$ is overarc \\
In this case, the degree zero part of the underarc before passing $c$ is $s^{k-1+\alpha(c)}$. 
Then the assignment of the arc next to $a$ is $s^{-1}(1-u)\cdot s^{k}(1+Au+Bt)-t\cdot s^{k-1+\alpha(c)}=s^{k-1}\left(1+(A-1)u+(B-s^{\alpha(c)})t\right)$. 

(5) The case where $\epsilon(c)=1$ and ${c\in \rm EU}$ and $a$ is overarc \\
In this case, the degree zero part of the underarc before passing $c$ is $s^{k+1-\alpha(c)}$. 
Then the assignment of the arc next to $a$ is $s(1+u)\cdot s^{k}(1+Au+Bt)+t\cdot s^{k+1-\alpha(c)}=s^{k+1}\left(1+(A+1)u+(B+s^{-\alpha(c)})t\right)$. 

(6) The case where $\epsilon(c)=1$ and ${c\in \rm EU}$ and $a$ is underarc \\
In this case, the degree zero part of the overarc before passing $c$ is $s^{k-1+\alpha(c)}$. 
Then the assignment of the arc next to $a$ is $s^{-1}\cdot s^{k}(1+Au+Bt)-(u+t)\cdot s^{k-1-\alpha(c)}=s^{k-1}\left(1+(A-s^{\alpha(c)})u+(B-s^{\alpha(c)})t\right)$. 

(7) The case where $\epsilon(c)=-1$ and ${c\in \rm EU}$ and $a$ is underarc \\
In this case, the degree zero part of the overarc before passing $c$ is $s^{k+1+\alpha(c)}$. 
Then the assignment of the arc next to $a$ is $s\cdot s^{k}(1+Au+Bt)+(u+t)\cdot s^{k+1+\alpha(c)}=s^{k+1}\left(1+(A+s^{\alpha(c)})u+(B+s^{\alpha(c)})t\right)$. 

(8) The case where $\epsilon(c)=-1$ and ${c\in \rm EU}$ and $a$ is overarc \\
In this case, the degree zero part of the underarc before passing $c$ is $s^{k-1-\alpha(c)}$. 
Then the assignment of the arc next to $a$ is $s^{-1}(1-u)\cdot s^{k}(1+Au+Bt)-t\cdot s^{k-1-\alpha(c)}=s^{k-1}\left(1+(A-1)u+(B-s^{-\alpha(c)})t\right)$. 

By noting that each crossing is passed twice from over and under, we can compute that
\begin{align}
 &f_{K,u}(s)=\sum \limits_{c\in {\rm EO}}\epsilon(c) (1-s^{-\alpha(c)})  +  \sum \limits_{d\in {\rm EU}} \epsilon(d) (1-s^{\alpha(d)}) = -W_{K}(s^{-1}) \notag \\
 & f_{K,t}(s)=\sum \limits_{c\in {\rm EO}}\epsilon(c) (s^{\alpha(c)}-s^{-\alpha(c)})  +  \sum \limits_{d\in {\rm EU}} \epsilon(d) (s^{-\alpha(d)}-s^{\alpha(d)})=f_{K,u}(s)-f_{K,u}(s^{-1}) \notag
\end{align}

\begin{figure}[htbp]
 \begin{center}
  \includegraphics[width=160mm]{eightcases.eps}
 \end{center}
 \caption{The eight cases, where boxes represent degree one parts}
 \label{eightcases}
\end{figure}

\end{proof}

Next proposition states that our invariant is an anti-homomorphism from the set of long virtual knots to the ring $X$.
\begin{prop}\label{product}
Let $K_{1}$ and $K_{2}$ be two long virtual knots. 
Then $F_{K_{1}\cdot K_{2}}(s,u,t)=F_{K_{2}}(s,u,t) \cdot F_{K_{1}}(s,u,t)$ holds. 
\end{prop}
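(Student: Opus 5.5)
Take diagrams $D_1$ and $D_2$ representing $K_1$ and $K_2$, and use the concatenated diagram $D_1\cdot D_2$ to represent $K_1\cdot K_2$. By Definition~\ref{defini_inv}, it suffices to show that $F_{D_1\cdot D_2}=F_{D_2}\cdot F_{D_1}$. The key observation is that the classical crossings of $D_1\cdot D_2$ are exactly those of $D_1$ together with those of $D_2$. No classical crossing involves one strand from $D_1$ and one from $D_2$: the concatenation is made by placing $D_1$ to the left of $D_2$, and any intersections that might appear are virtual.

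First fix $a\in X$ and consider the coloring $\mathcal{C}_a$ of $D_1\cdot D_2$ whose initial semiarc is colored $a$; it exists and is unique by Theorem~\ref{def_diagram}. Restrict $\mathcal{C}_a$ to the semiarcs belonging to $D_1$, where the semiarc joining the two pieces is regarded both as the terminal semiarc of $D_1$ and as the initial semiarc of $D_2$. Each classical crossing of $D_1$ only involves semiarcs of $D_1$, so this restriction is a coloring of $D_1$ with initial color $a$. By the uniqueness part of Theorem~\ref{def_diagram}, it is $\mathcal{C}_a$ on $D_1$. Hence the connecting semiarc receives the color $b:=F_{D_1}\cdot a$.

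Next, restrict $\mathcal{C}_a$ to the semiarcs of $D_2$. The same reasoning shows this is a coloring of $D_2$ with initial color $b$, so by uniqueness again the terminal semiarc receives $F_{D_2}\cdot b=F_{D_2}\cdot F_{D_1}\cdot a$. On the other hand, Theorem~\ref{def_diagram} says this terminal color equals $F_{D_1\cdot D_2}\cdot a$. Taking $a=1$ gives $F_{D_1\cdot D_2}=F_{D_2}F_{D_1}$, which is the claimed order since colors are multiplied on the left.

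The only point needing care is the identification of semiarcs. One must check that the junction point between $D_1$ and $D_2$ lies on a single semiarc, which holds because it is not a classical crossing. One must also check that any virtual crossings between the two pieces do not subdivide semiarcs or impose coloring conditions, which holds because only classical crossings carry conditions. With these checks, the restriction arguments above go through without any matrix computation.
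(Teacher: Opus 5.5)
Your proof is correct and takes the same approach as the paper. The paper also argues that the color on the initial semiarc of each $D_i$ is multiplied on the left by $F_{K_i}(s,u,t)$ to give the terminal color, so the concatenation multiplies by $F_{K_2}(s,u,t)\cdot F_{K_1}(s,u,t)$; you additionally spell out the restriction-and-uniqueness step via Theorem~\ref{def_diagram} and the identification of the connecting semiarc, which the paper leaves implicit.
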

\begin{proof}
Take long virtual knot diagrams $D_1$ and $D_2$ representing $K_1$ and $K_2$, respectively. 
Note that an input element of $X$, which is the assignment of the initial semiarc of $D_{i}$ of $(X; \underline{\triangleright}, \overline{\triangleright})-$coloring, is multiplied by $F_{K_{i}}(s,u,t)$ from the left to be the output element, which is the assignment of the terminal semiarc of $D_{i}$ of $(X; \underline{\triangleright}, \overline{\triangleright})-$coloring, for each $i=1,2$. 
Therefore, the input element of $D_{1}\cdot D_{2}$ is multiplied by $F_{K_{2}}(s,u,t)\cdot F_{K_{1}}(s,u,t)$ from the left to be the output element. 
\end{proof}

\begin{prop}
For a classical long virtual knot $K$, the equation $F_{K}(s,u,t)=1$ holds.  
\end{prop}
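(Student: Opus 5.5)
Since $F_{K}$ is an invariant (Definition~\ref{defini_inv}), we may choose any diagram of $K$. For a classical long virtual knot, take a diagram $D$ without virtual crossings, so that $D$ is a classical long knot diagram.

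The approach is topological and avoids any algebra inside $X$. A classical long knot closes up to a classical knot. Since $D$ is classical and agrees with the real axis outside a disk, we can use the standard argument that every classical long knot is equivalent, by classical Reidemeister moves alone, to a diagram that is a product of pieces we can control. The cleanest version of this uses a different principle: the \emph{swallow–follow} (or ``long knot = closed knot'') argument, which shows that for classical long knots the product is commutative and any classical long knot $K$ satisfies $K\cdot L$ equivalent to $L\cdot K$.

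The plan is as follows.

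First, we use the classical unknotting-number fact that crossing changes can turn $D$ into a trivial diagram. This route is awkward, because $F$ is not invariant under crossing changes.

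Instead, we use Wirtinger-type reasoning. A coloring of a classical diagram by a biquandle is determined locally, and in $\mathbb{R}^{2}$ we can push the initial semiarc. Concretely, consider the closure $\hat D$ in $S^{2}$. The long knot obtained by cutting $\hat D$ at a point just next to $+\infty$ can be isotoped in $S^{2}$, by passing the far strand over the point at infinity, to the long knot obtained by cutting at another point. Over or under the whole diagram is realized by Reidemeister II and III moves together with a move through $\infty$. The latter is available for classical diagrams because $\mathbb{R}^{2}\cup\{\infty\}=S^{2}$.

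Since $F_{K}$ is a fixed left-multiplication, cutting at the semiarc with color $x_{i}$ gives the relation that going once around equals $F$ conjugated by the partial transfer map. The key step is then to show that the colors are a consistent cyclic assignment. Taking a coloring of the closed diagram $\hat D$, which exists because a long knot coloring closes up precisely when $F\cdot a=a$, we need $F\cdot a=a$ for all $a$, which gives $F=1$ by taking $a=1$.

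So the main step is to show that every $a\in X$ extends to a coloring of the closed classical knot $\hat D$. For classical knots one expects this from the fact that the biquandle $X$ restricted to classical diagrams factors through the fundamental-group-type structure. This is the main obstacle. An alternative that I would try first is to induct on the number of crossings by moving the initial strand: push the initial semiarc entirely over the diagram, using Reidemeister II and III moves past every other strand. This is possible classically because the initial arc can be made to lie above everything via ascending-diagram techniques.

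If this is done cleverly, the diagram becomes $D'$ in which the first arc passes over all other strands. By Reidemeister invariance, the resulting relation $F=s^{k}Fs^{-k}$-type constraints combined with the degree-$0$ and degree-$1$ computations give $F=1$ order by order in the filtration $F^{k}$. This uses Proposition~\ref{prop_base} to compare coefficients.
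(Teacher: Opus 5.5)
There is a genuine gap. The step you call ``the main obstacle'' is the entire content of the proposition, and neither of your routes gets past it.

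\textbf{The first route is circular.} Showing that every $a\in X$ extends to a coloring of the closure $\hat D$ is equivalent to $F_D\cdot a=a$ for all $a$, i.e.\ to $F_K=1$. The appeal to $X$ ``factoring through a fundamental-group-type structure'' is an expectation, not a proof. The swallow--follow remark only gives that $F_K$ commutes with every $F_L$, not that $F_K=1$.

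\textbf{The second route fails too.} Rerouting the initial over-bridge by Reidemeister II/III moves produces an equivalent diagram, but you never say how $F$ would be computed from it. Making the strand lie above everything, as in ascending diagrams, requires crossing changes. More importantly, the constraints you plan to use cannot determine $F_K$. Take $F=1+u^{2}$: it has degree-$0$ part $1$, its degree-$1$ part vanishes (which is all Proposition~\ref{deg1} gives for classical $K$, where every index is $0$), and it satisfies $F=s^{k}Fs^{-k}$ for all $k$ because $us=su$.

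More generally, moving the cut point only gives the twisted conjugation $F_K-1=\alpha(F_K-1)\beta$ of Proposition~\ref{base change}. Here the degree-$0$ parts of $\alpha$ and $\beta$ are $s^{\pm1}$ and $s^{\mp1}$. Since $sts^{-1}\equiv t$ modulo $F^{2}$, $s$ is central in the associated graded ring. So this relation is vacuous in the lowest degree where $F_K-1$ could be nonzero, and an order-by-order induction never gets started.

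\textbf{What is missing} is a way to use the planarity of $D$ inside the linear system itself, which is what the paper does. At each crossing $c_k$ it forms a left-linear combination $R_k$ of the two coloring equations: $s^{-1}(1+t)O_k+U_k$ for a positive crossing and $O_k+s^{-1}(1+t)U_k$ for a negative one. For either sign, $R_k$ reads $z+s^{-1}(1+t)w-x-s^{-1}(1+t)y=0$ in the four semiarcs at $c_k$. Because $D$ is a planar diagram with no virtual crossings, an Alexander numbering of its regions gives each crossing a weight $w(c_k)$. In $\sum_{k}\bigl(s^{-1}(1+t)\bigr)^{w(c_k)}R_k$ the coefficients of all intermediate semiarcs then cancel, while $x_1$ and $x_{2m+1}$ appear with a unit coefficient and its negative. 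Hence $x_{2m+1}=x_1$ in every coloring, i.e.\ $F_K=1$. This telescoping identity is exactly the missing proof that every $a$ extends to a coloring of $\hat D$; without it, or an equivalent planar argument, your proposal does not establish the statement.
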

\begin{proof}
Take a long virtual knot diagram $D$ without virtual crossongs representing $K$. 
Suppose that $D$ has $m$ classical crossings. 
Give them the name $c_1,\dots, c_m$, and give the semiarcs of $D$ variables $x_1,\dots, x_{2m+1}$. 
We will show that the $(X; \underline{\triangleright}, \overline{\triangleright})-$coloring conditions force $x_{2m+1}$ to be equal to $x_1$. 
At each classical crossing $c_k$, there are two equations for $(X; \underline{\triangleright}, \overline{\triangleright})-$coloring. 
See Figure~\ref{relation_Rk}. 
We set $O_k$ and $U_k$ to be the conditions $w-s(1-u)^{-1}x-(1-u)^{-1}ty=0$ and $z-s^{-1}(1+t)(1-u)^{-1}(1-u-t)y+(1-u-t)^{-1}(u+t)x=0$, respectiely if $c_k$ is a positive crossing, where $x,y,z,w$ are variables around $c_k$ as in Figure~\ref{relation_Rk}. 
And we set $O_k$ and $U_k$ to be the conditions $z-s^{-1}(1+t)(1-u-t)y+(1-u-t)^{-1}x=0$ and $w-sx-(u+t)y=0$, respectiely if $c_k$ is a negative crossing, where $x,y,z,w$ are variables around $c_k$ as in Figure~\ref{relation_Rk}. 
Then set $R_k$ to be the condition $s^{-1}(1+t)O_k+U_k$ if $c_k$ is a positive crossing, and to be the condition $O_k+s^{-1}(1+t)U_k$ if $c_k$ is a negative crossing. 
In any cases, the condition $R_k$ is $z+s^{-1}(1+t)w-x-s^{-1}(1+t)y=0$. 

\begin{figure}[htbp]
 \begin{center}
  \includegraphics[width=120mm]{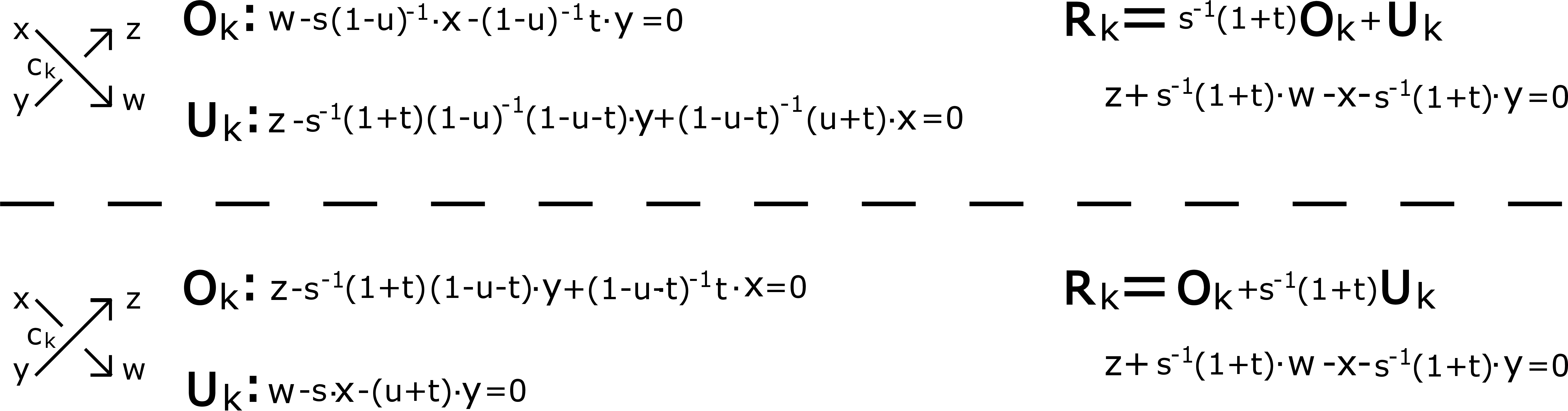}
 \end{center}
 \caption{The relation $R_{k}$ definied at a classical crossing $c_{k}$}
 \label{relation_Rk}
\end{figure}

Next we define the weights at classical crossings via the weights of the regions. 
Since $D$ is an immersion of $\mathbb{R}$ in $\mathbb{R}^{2}$, the plane $\mathbb{R}^{2}$ is divided into $m+2$ regions. 
Assign integers to these regions with the following rule: 
If some region is reached by passing over a semiarc from the left from the region whose assignment is $n$, then the assignment of the region is $n+1$. 
See Figure~\ref{weight_region}. 
This assignment is called a weight of regions. 
Take (any) weight of regions. 
Using this weight of regions, we define the weight of the crossing $c_k$ as the assignment of one of the adjacent region as in Figure~\ref{weight_crossing}. 

\begin{figure}[htbp]
 \begin{center}
  \includegraphics[width=80mm]{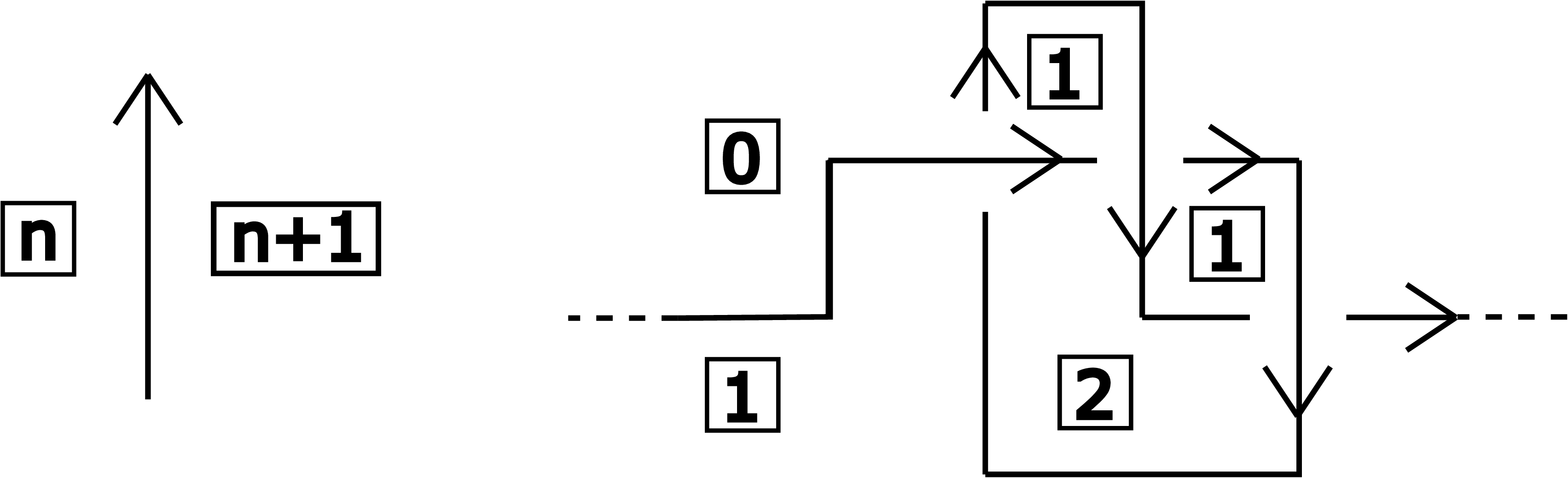}
 \end{center}
 \caption{The rule for weights of regions and an example}
 \label{weight_region}
\end{figure}

\begin{figure}[htbp]
 \begin{center}
  \includegraphics[width=40mm]{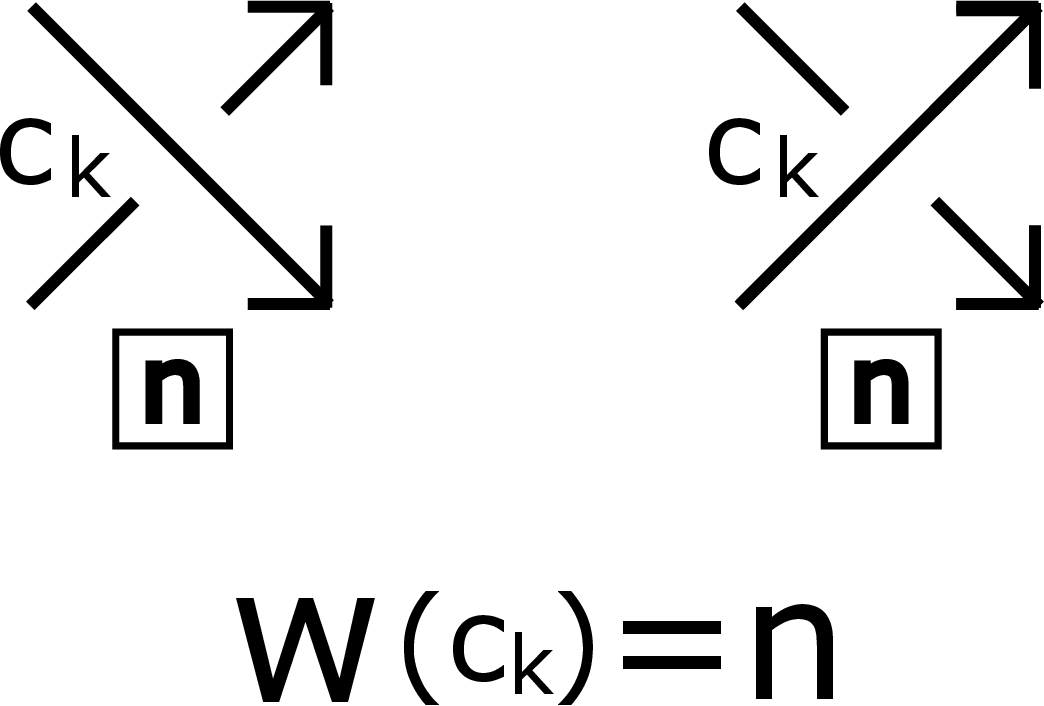}
 \end{center}
 \caption{The weight $w(c_{k})$ of a crossing $c_{k}$}
 \label{weight_crossing}
\end{figure}

Then consider the condition $R=\sum \limits_{i=1}^{m} \left(s^{-1}(1+t)\right)^{w(c_i)}\cdot R_i$. 
By computation, we see that the coefficients of all $x_{i}$ other than $x_1$ and $x_{2m+1}$ in $R$ is zero. See Figure~\ref{Rk_connect}. 
In Figure~\ref{Rk_connect}, all crossings are depicted as positive crossings, but this is enough for general cases since $R_k$ is independent of the sign of $c_k$. 
Similarly, we see that  the coefficient of $x_{1}$ in $R$ is some unit and that of $x_{2m+1}$ in $R$ is as same as this unit with opposite sign. See Figure~\ref{Rk_endpoints}. 
Hence we see that $x_{2m+1}=x_1$. This means $F_{K}(s,u,t)=1$. 

\begin{figure}[htbp]
 \begin{center}
  \includegraphics[width=145mm]{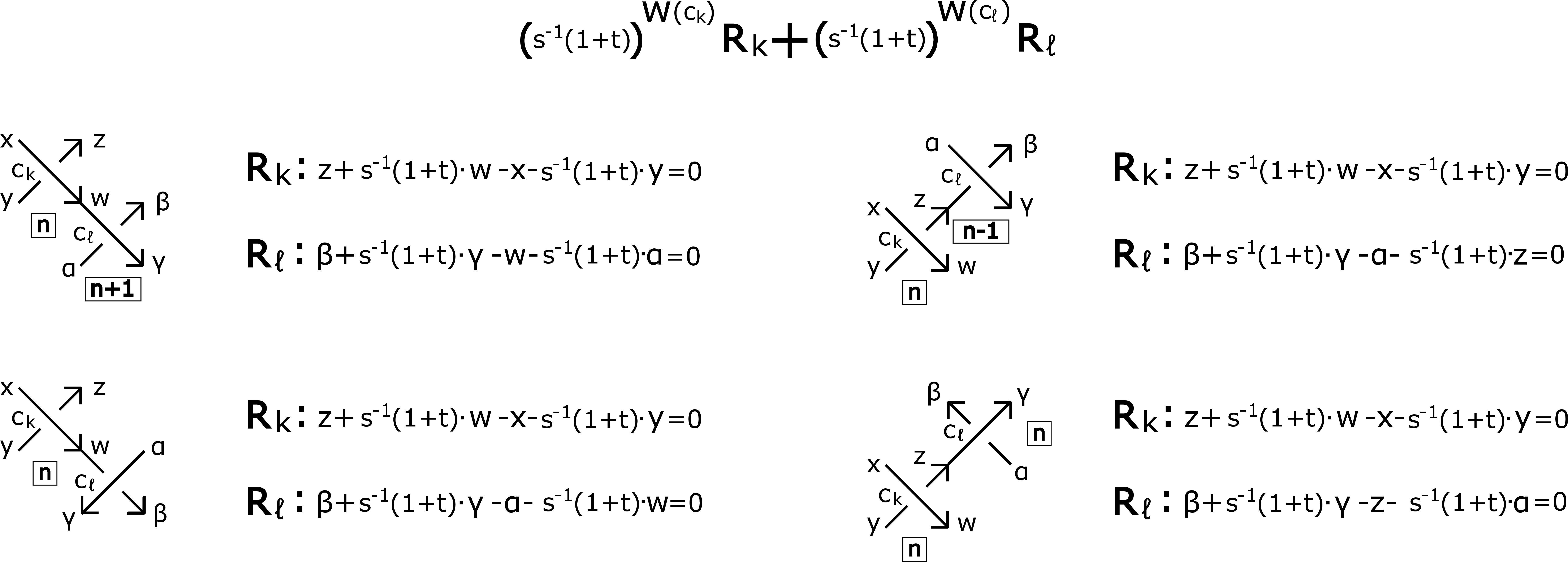}
 \end{center}
 \caption{Vanishment of the coefficients of the intermediate semiarcs in $\sum \limits_{i=1}^{m} \left(s^{-1}(1+t)\right)^{w(c_{i})}R_{i}$}
 \label{Rk_connect}
\end{figure}

\begin{figure}[htbp]
 \begin{center}
  \includegraphics[width=145mm]{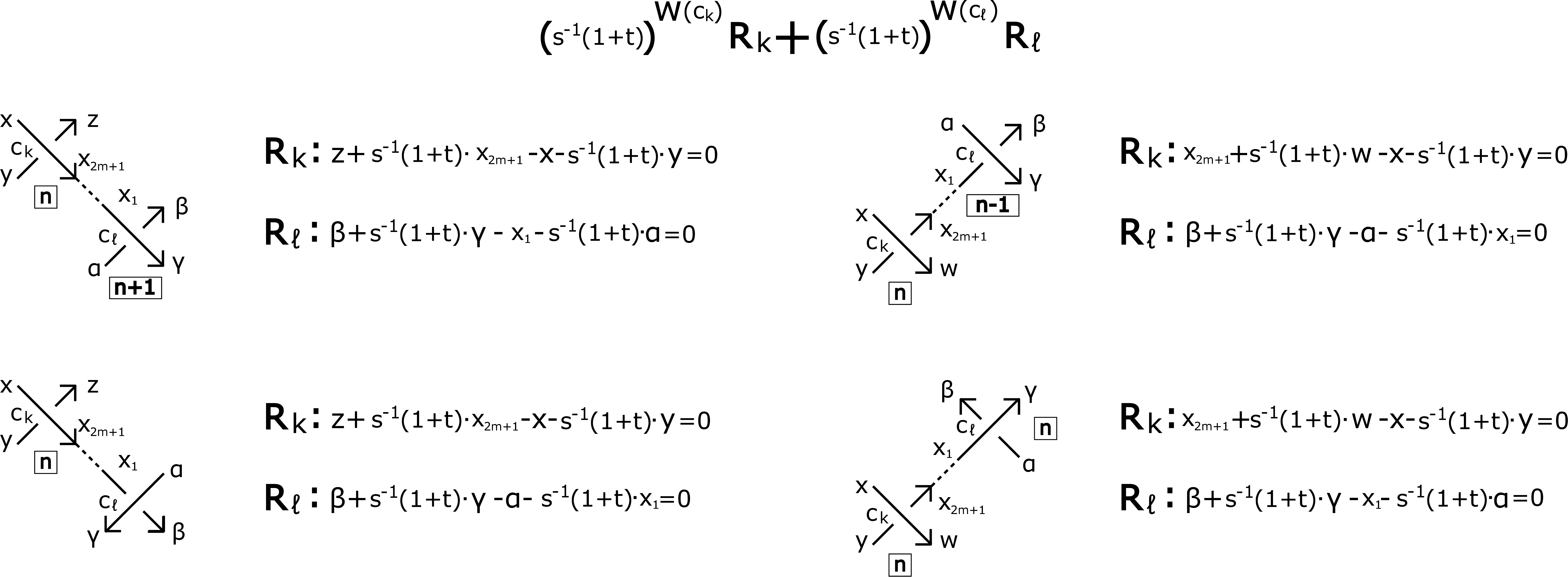}
 \end{center}
 \caption{The coefficients of the initial and terminal sumiarcs in $\sum \limits_{i=1}^{m} \left(s^{-1}(1+t)\right)^{w(c_{i})}R_{i}$}
 \label{Rk_endpoints}
\end{figure}

\end{proof}

\begin{prop}
For a long virtual knot $K$, we have $F_{-K^{*}}(s,u,t)=\left(F_{K}(s,u,t)\right)^{-1}$. 
\end{prop}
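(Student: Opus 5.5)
Fix a long virtual knot diagram $D$ of $K$. I would build a diagram of $-K^{*}$ directly from $D$ and compare its coloring equations with those of $D$.

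Reflecting $D$ in a vertical line and reversing its orientation gives a long diagram $D'$ of $-K^{*}$. Its strand runs from the image of the old terminal end to the image of the old initial end. Each semiarc $\gamma$ of $D$ corresponds to a semiarc $\gamma'$ of $D'$, traversed in the opposite direction. The classical crossings correspond as well, and the over/under information is preserved, since the reflection is planar and does not switch strands. Combining the planar reflection with the reversal of both strands, a positive crossing stays positive and a negative one stays negative. The price is that the roles change: the two incoming semiarcs of a crossing of $D$ become the two outgoing semiarcs of the corresponding crossing of $D'$, and vice versa. The left/right position of the over-strand relative to the under-strand also changes accordingly.

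The key step is the following claim: if $\mathcal{C}$ is an $(X;\underline{\triangleright},\overline{\triangleright})$-coloring of $D$, then assigning $\mathcal{C}(\gamma)$ to $\gamma'$ is a coloring of $D'$. To check this I would fix a positive crossing of $D$ with incoming colors $x,y$ and outgoing colors $y\,\overline{\triangleright}\,x$, $x\,\underline{\triangleright}\,y$, as in Figure~\ref{biquandle_coloring_crossing}. At the corresponding positive crossing of $D'$, the inputs are exactly these outgoing colors, placed in the swapped positions. The outputs prescribed by the biquandle rule are then given by the inverse of the map $S$ from condition (iii), possibly composed with a swap. So the claim reduces to one algebraic identity: the map $(x,y)\mapsto(y\,\overline{\triangleright}\,x,\;x\,\underline{\triangleright}\,y)$, conjugated by the swap of coordinates, is its own inverse. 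The negative crossing is handled the same way.

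If that identity holds for the operations of this biquandle, then $D'$ carries a coloring with initial color $\mathcal{C}_{a}(\text{terminal})=F_{K}\cdot a$ and terminal color $a$. By the uniqueness in Theorem~\ref{def_diagram}, this gives $F_{-K^{*}}F_{K}\,a=a$ for all $a$. Taking $a=1$ yields $F_{-K^{*}}F_{K}=1$. Since $F_{K}$ has degree-zero part $1$, it is a unit by Lemma~\ref{unit}, and therefore $F_{-K^{*}}=F_{K}^{-1}$.

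The main obstacle is this crossing-level identity. It is not a general biquandle axiom, so I would verify it directly for the explicit linear formulas of this biquandle, using the inverse of $S$ computed in part (iii) of the proof of the biquandle theorem. If the naive correspondence of colors fails, I would instead look for a fixed automorphism $\phi$ of $X$ compatible with the operations under which the identity holds, and color $D'$ by $\phi(\mathcal{C}(\gamma))$. The argument then gives $F_{-K^{*}}F_{K}=1$ up to applying $\phi$, and I would still need to check that $\phi$ fixes the relevant elements.

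Virtual crossings need no check, since colors pass through them unchanged.
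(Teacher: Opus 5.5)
Your overall route is the paper's: keep the colour of each semiarc, read $-D^{*}$ in the opposite direction, get $F_{-K^{*}}F_{K}=1$ from the uniqueness in Theorem~\ref{def_diagram}, and finish with Lemma~\ref{unit}. That last part is fine. The gap is at the crossing level, and it comes from your sign claim.

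A planar reflection reverses the orientation of $\mathbb{R}^{2}$, so it flips every crossing sign. Reversing both strands does not change the sign. Hence a positive crossing of $D$ becomes a \emph{negative} crossing of $-D^{*}$, with over/under preserved. Because of this, the identity you reduce to ($S$ conjugated by the swap $\tau$ is an involution) is false for this biquandle. You can see this already modulo $F^{1}$. There both operations are left multiplication by $s^{-1}$ on the first argument, so $S\equiv s^{-1}\tau$. Any composite of two copies of $S$ with swaps is therefore $s^{-2}$ times a permutation, never the identity. Concretely, passing over a positive crossing multiplies the colour by $s$ modulo $F^{1}$. If the corresponding crossing of $-D^{*}$ were again positive, reading it backwards would force $x\equiv s^{2}x$ for the over-arc colour $x$. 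That is impossible, for instance, in the colouring with initial colour $1$.

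The correct check is as follows. Write $M_{\epsilon}$ for the matrix sending $(\text{over-in},\text{under-in})$ to $(\text{over-out},\text{under-out})$ at a crossing of sign $\epsilon$. In $-D^{*}$ the in- and out-semiarcs are exchanged and the sign is reversed. So the condition there, written in $D$'s labels, is $(\text{over-in},\text{under-in})^{T}=M_{-\epsilon}(\text{over-out},\text{under-out})^{T}$. This is equivalent to the condition in $D$ exactly when $M_{+}$ and $M_{-}$ are mutually inverse.

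That is precisely what the paper verifies with its two explicit $2\times 2$ matrices. It is also forced by invariance of colourings under the Reidemeister II move with both strands oriented the same way. With the sign corrected, no automorphism $\phi$ is needed. Your fallback would in any case only give a $\phi$-twisted relation, not the stated equality $F_{-K^{*}}=F_{K}^{-1}$.
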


\begin{proof}
Take a long virtual knot diagram $D$ representing $K$. 
Suppose that $D$ has $n$ classical crossings. 
Assign variables $x_{1},\cdots, x_{2n+1}$ to semi-arcs in order from the initial arc to the terminal arc. 
Take the equations at each classical crossings for $X-$coloring to compute $F_{K}(s,u,t)$. 
Consider $-D^{*}$. 
Assign variables $y_{i}$ to the semiarc of $-D^{*}$ corresponding to the semiarc of $D$ whose assigned variable is $x_{i}$ for $i=1,\cdots,2n+1$. 
Note that the indices assigned to semiarcs of $-D^{*}$ are reverse order from the initial arc to the terminal arc. 
By computations, we see that the two equations at a classical crossing $c$ of $-D^{*}$ are equivalent to those of at the classical crossing of $D$ corresponding to $c$ as in Figure~\ref{coloring_inverse}. 
This can be checked by showing that matrices of $X$ coefficients 
$\left(
  \begin{array}{cc} 
    s^{-1}(1+t)(1-u-t) & -(1-u-t)^{-1} t\\
    u+t & s
  \end{array}
\right) $
 and 
$\left(
  \begin{array}{cc} 
    s(1-u)^{-1} & (1-u)^{-1}t \\
    -(1-u-t)^{-1}(u+t) & s^{-1}(1+t)(1-u)^{-1}(1-u-t)
  \end{array}
\right) $ 
are inverses of each other. 
Hence we have $y_{2n+1}=F_{K}(s,u,t)\cdot y_{1}$ as $x_{2n+1}=F_{K}(s,u,t)\cdot x_{1}$. 
Since $y_{1}$ is the variable assigned to the terminal arc of $-D^{*}$ and $y_{2n+1}$ is the variable assigned to the initial arc of $-D^{*}$, we get $F_{-K^{*}}(s,u,t)=\left(F_{K}(s,u,t)\right)^{-1}$. 

\begin{figure}[htbp]
 \begin{center}
  \includegraphics[width=130mm]{coloring_inverse.eps}
 \end{center}
 \caption{Equations of classical crossings of $D$ and $-D^{*}$}
 \label{coloring_inverse}
\end{figure}

\end{proof}

\begin{prop}\label{base change}
Let $K$ and $K'$ be two long virtual knots such that the closures of $K$ and $K'$ are equivalent virtual knots. 
Then there are two unit elements $\alpha$ and $\beta$ of $X$ such that $F_{K'}(s,u,t)=1+\alpha \left(F_{K}(s,u,t)-1\right)\beta$ holds. 
Inparticular, $F_{K}(s,u,t)=1$ if and only if $F_{K'}(s,u,t)=1$ holds. 
\end{prop}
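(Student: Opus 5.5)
Two long virtual knots with equivalent closures differ by the choice of base point on a diagram of the closure; any two base points are related by finitely many moves of the point across one classical crossing, plus Reidemeister moves and moves across virtual crossings, which do not change the long knot (pushing the base point through a virtual crossing is a detour move). So, by induction on the number of such moves, it suffices to treat $K'$ obtained from $K$ by moving the base point across a single classical crossing $c$. Note also that the relation $F_{K'}-1=\alpha(F_K-1)\beta$ is transitive with units multiplying: if $F_{K''}-1=\alpha'(F_{K'}-1)\beta'$ then $F_{K''}-1=\alpha'\alpha(F_K-1)\beta\beta'$. So the one-step case is enough.

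For the one-step case I will work with colorings rather than the Gaussian elimination. Choose a diagram $D$ of $K$ whose initial semiarc enters the crossing $c$, so that the strand passes $c$ first (as over- or underarc) near the start and passes $c$ again somewhere later. Cut the closure just after $c$ instead: this gives a diagram $D'$ of $K'$, in which the first passage through $c$ now occurs near the end of the long strand. Colorings of the closure are not the right object, since the closure need not admit a non-trivial coloring. Instead, consider a coloring $\mathcal{C}$ of $D$ with initial color $a$ and terminal color $F_K a$. Let $b$ be the color of the semiarc just after the first passage through $c$. The crossing relation at $c$ expresses $b$ in terms of $a$ and the color $y$ of the other strand entering $c$.

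The middle portion of $D$ between the two passages is exactly $D'$ with its last passage through $c$ removed. Thus $D'$ is colored with initial color $b$, and its terminal color is $F_{K'}b$. The key computation is to write the crossing relations at $c$ in matrix form, using the $2\times 2$ matrices from the proof of $F_{-K^{*}}=F_K^{-1}$ (the map $S$ and its inverse). This has to be done for the sign of $c$ and for which strand is over at the first passage, giving four cases. Tracking the colors shows that the pair $(a,F_Ka)$ and the pair $(b,F_{K'}b)$ are linked by the relation at $c$. The consequence should be an identity of the form $F_{K'}-1=\alpha(F_K-1)\beta$, where $\alpha$ and $\beta$ are explicit products of the entries $s^{\pm1}$, $(1-u)^{\pm1}$, $(1-u-t)^{\pm1}$, $(1+t)$ and similar. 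All of these are units by Lemma~\ref{unit}. For instance, if the first passage at $c$ is an underpass of a negative crossing, then $b=sa+(u+t)y$, and $y$ is a color on the later passage, expressible through $F_Ka$.

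The main obstacle is this middle computation. The colors $a$ and $b$ are not simply related by left multiplication, because the relation at $c$ involves the color $y$ of the other strand, which lies on the later passage. One therefore has to solve for $y$ simultaneously using both passages. My first attempt is to exploit linearity: define $F_{K'}$ via the input $b$, express $y$ and $F_Ka$ in terms of $b$, and rearrange to isolate $F_{K'}-1$ sandwiched between units. Here the noncommutativity must be handled carefully, using identities such as $s(u+t)s^{-1}=(u+t)(1+t)^{-1}$ from the Remark. The final claim then follows at once: since $\alpha$ and $\beta$ are units, $F_K-1=0$ if and only if $F_{K'}-1=0$.
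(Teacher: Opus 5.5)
Your reduction to a single slide of the base point across a classical crossing $c$ matches the paper, and so does your remark that the relation composes with the units multiplying. The gap is in the one-step case. There the identity is only announced (it ``should be'' of the right form), and the one concrete claim you make is false. Restricting $\mathcal{C}$ to the semiarcs after the first passage does \emph{not} give a coloring of $D'$.

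In the paper's notation, write the two relations at $c$ as $x_{2}=Bx_{1}+Cx_{k}$ (first passage) and $x_{k+1}=B'x_{k}+C'x_{1}$ (second passage), with $B,B'$ units and $C,C'\in F^{1}$. In $D'$ the old initial and terminal semiarcs are glued, so the second relation becomes $x_{k+1}=B'x_{k}+C'x_{2n+1}$. But $\mathcal{C}$ has $x_{1}=a$ and $x_{2n+1}=F_{K}a$, so it violates this relation by $C'(F_{K}-1)a$. Hence the pairs $(a,F_Ka)$ and $(b,F_{K'}b)$ are not linked through the relation at $c$ alone: the discrepancy propagates through the whole diagram. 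For the same reason, $\alpha$ and $\beta$ cannot be products of local crossing entries as you expect.

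The missing idea is to treat this discrepancy as a source term. Let $x_{i}=A_{i}x_{1}$ be the solution for $D$, so $A_{2}=B+CA_{k}$, and put $a=A_{2}^{-1}b$. The system for $D'$ is left-linear and uniquely solvable by the elimination in Theorem~\ref{def_diagram}. Let $z$ be its solution with initial color $0$ and right-hand side $C'(F_{K}-1)a$ in the equation for $x_{k+1}$. By linearity, $Bz_{2n+1}+Cz_{k}=\gamma C'(F_{K}-1)a$ for some $\gamma\in X$. Then the restriction of $\mathcal{C}$ plus $z$ is the coloring of $D'$ with initial color $b$, and its terminal color is
\[
BF_{K}a+CA_{k}a+\gamma C'(F_{K}-1)a=b+(B+\gamma C')(F_{K}-1)A_{2}^{-1}b .
\]
This gives the statement with $\alpha=B+\gamma C'$ and $\beta=A_{2}^{-1}$. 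Here $\alpha$ is a unit by Lemma~\ref{unit}, since $C'\in F^{1}$.

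Be aware that the paper's own proof takes this same route but overlooks the same discrepancy: it asserts that $x_{i}=A_{i}A_{2}^{-1}x_{2}$ solves the new system. Its choice $\alpha=B$ is therefore not correct in general. For example, take $K_{2}(1)$ modulo $t$ and slide the base point past the first crossing. There $C=(1-u)^{-1}t\equiv 0$, so $A_2\equiv B$ and the quotient is commutative, and $\alpha=B$ would predict $F_{K'}-1=F_K-1$. A direct computation instead gives $F_{K'}-1=\bigl(1+s^{-1}u(1-u)^{-1}\bigr)^{-1}(F_{K}-1)\neq F_{K}-1$. The statement itself survives with the corrected $\alpha$.
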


\begin{proof}
Consider using pointed Gauss diagrams. 
It is enough to show the statement under sliding the specified point along one classical crossing point. 
Before sliding, construct and solve the system of equations. 
Set the solution to be $x_{i}=A_{i}x_{1}$ for $i=1,\dots,2n+1$ for unit elements $A_{i}\in X$. 
Note that $A_{1}=1$ and $A_{2n+1}=F_{K}(s,t,u)$ holds. 
Set the first equation to be $x_{2}=Bx_{1}+Cx_{k}$ for some integer $k$, a unit element $B\in X$ and $C\in X$ of degree at least $1$. 
Note that $A_{2}=B+CA_{k}$ holds. 
After sliding, the system of equations are changed as follows: 
discard $x_{2}=Bx_{1}+Cx_{k}$, replace $x_{1}$ in the remaining equations with $x_{2n+1}$, and introduce an equation $y=Bx_{1}+Cx_{k}$, where $y$ is the variable assigned to the new terminal arc. 
See Figure~\ref{basechange}. 
Then we can solve this new system of equations as $x_{i}=A_{i}\left(A_{2}\right)^{-1}$ for $i=2,\dots,2n+1$ and $y=\left(BA_{2n+1}\left(A_{2}\right)^{-1}+CA_{k}\left(A_{2}\right)^{-1}\right)x_{2}=\left( BA_{2n+1}\left(A_{2}\right)^{-1}+\left(A_{2}-B\right)\left(A_{2}\right)^{-1}\right)x_{2}=\left( BA_{2n+1}\left(A_{2}\right)^{-1}+1-B\left(A_{2}\right)^{-1}\right)x_{2}=\left( 1+B\left(F_{K}(s,u,t)-1\right)\left(A_{2}\right)^{-1}\right)$. 
Hence we get the statement. 

\begin{figure}[htpb]
 \begin{center}
  \includegraphics[width=110mm]{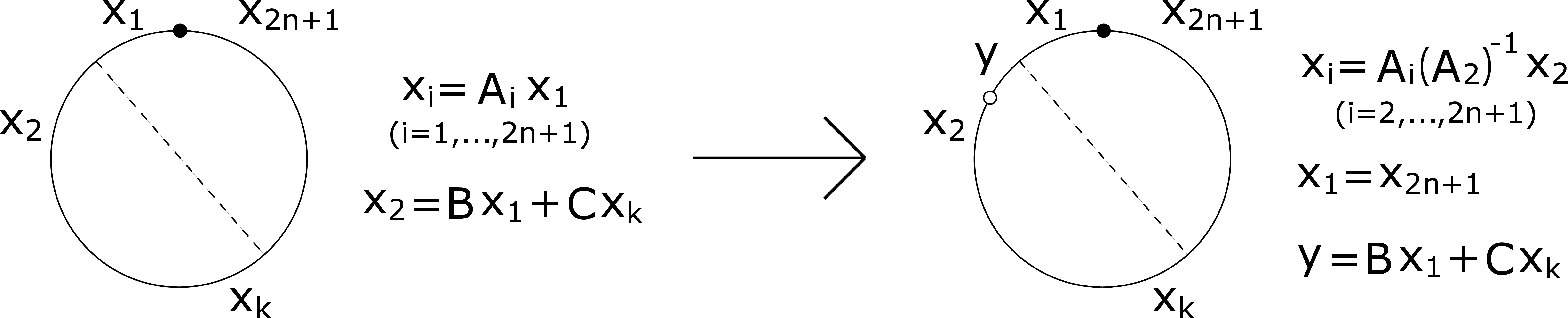}
 \end{center}
 \caption{Change of a system of equations under change of specified point, where black dot indicates old specified point and white dot indicates new one}
 \label{basechange}
\end{figure}

\end{proof}

\section{Non-commutativity of long virtual knots} \label{section5}
Being different from classical long knots, long virtual knots are non-commutative under the concatenation. 
This is firstly observed by Manturov \cite{manturov}. 
There are many works for the non-commutativity of long virtual knots. 
For example, it is proved by Chrisman \cite{chrisman} that non-equivalent two prime non-classical long virtual knots never commute. 
Since our invariant behaves as anti-homomorphism (Proposition~\ref{product}) and our ring $X$ is non-commutative, our invariant may be able to be used for checking the non-commutativity. 

\begin{thm}\label{commute}
Suppose that two long virtual knots $K_{1}$ and $K_{2}$ commute. 
Then the following two hold: 
\begin{itemize}
\item $W_{K_{1}}(s)\cdot W_{K_{2}}(s^{-1})=W_{K_{1}}(s^{-1})\cdot W_{K_{2}}(s)$ \\
\item $W_{{\rm EO}(K_{1})}(s)\cdot W_{{\rm EO}(K_{2})}(s^{-1})=W_{{\rm EO}(K_{1})}(s^{-1})\cdot W_{{\rm EO}(K_{2})}(s)$
\end{itemize}
\end{thm}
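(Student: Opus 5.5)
The plan is to use the invariant $F_{K}(s,u,t)$ together with Proposition~\ref{product}, and to compare degree-two parts. If $K_{1}\cdot K_{2}=K_{2}\cdot K_{1}$, then Proposition~\ref{product} gives
\begin{align}
F_{K_{2}}(s,u,t)\,F_{K_{1}}(s,u,t)=F_{K_{1}}(s,u,t)\,F_{K_{2}}(s,u,t). \notag
\end{align}
I will project this identity to $F^{2}/F^{3}$ and read off the coefficient of the word $ut$. By the proof of Proposition~\ref{prop_base}, $F^{2}/F^{3}$ is free over $\mathbb{Z}[s^{\pm1}]$ on $u^{2},ut,tu,t^{2}$, so that coefficient is well defined.

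Write $F_{K_{i}}=1+a_{i}(s)u+b_{i}(s)t+Q_{i}$ with $Q_{i}\in F^{2}$. By Proposition~\ref{deg1}, $a_{i}(s)=-W_{K_{i}}(s^{-1})$ and $b_{i}(s)=W_{K_{i}}(s)-W_{K_{i}}(s^{-1})$.

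First I need commutation rules valid modulo $F^{3}$. Since $us=su$, $u$ commutes with every $p(s)\in\mathbb{Z}[s^{\pm1}]$. From $s^{-1}ts=(1-u-t)^{-1}t$ we get $ts=st+(\text{degree}\ge 2)$. Hence $t\,p(s)=p(s)\,t+(\text{terms in }F^{2})$, and in particular $t\,p(s)\,u\equiv p(s)\,tu \pmod{F^{3}}$.

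Now expand both products modulo $F^{3}$.
\begin{itemize}
\item The degree $0$ and degree $1$ parts agree on both sides.
\item The terms $Q_{1}+Q_{2}$ appear identically on both sides and cancel.
\item The $u\cdot u$ terms give $a_{1}a_{2}u^{2}$ on both sides and cancel.
\item The $t\cdot t$ terms give $b_{i}b_{j}t^{2}$ on both sides, up to corrections in $F^{3}$, and cancel.
\item The mixed terms are the only ones that survive.
\end{itemize}
In $F_{1}F_{2}$ the mixed terms give $a_{1}b_{2}\,ut+b_{1}a_{2}\,tu$. In $F_{2}F_{1}$ they give $a_{2}b_{1}\,ut+b_{2}a_{1}\,tu$. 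Comparing the $ut$-coefficients in $\mathbb{Z}[s^{\pm1}]$ yields $a_{1}b_{2}=a_{2}b_{1}$.

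Substituting the formulas for $a_{i}$ and $b_{i}$ gives
\begin{align}
-W_{K_{1}}(s^{-1})\left(W_{K_{2}}(s)-W_{K_{2}}(s^{-1})\right)=-W_{K_{2}}(s^{-1})\left(W_{K_{1}}(s)-W_{K_{1}}(s^{-1})\right). \notag
\end{align}
The terms $W_{K_{1}}(s^{-1})W_{K_{2}}(s^{-1})$ cancel, which leaves exactly the first claimed identity.

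For the second identity, Definition~\ref{eonization} says ${\rm EO}$ preserves products. So ${\rm EO}(K_{1})\cdot{\rm EO}(K_{2})={\rm EO}(K_{1}\cdot K_{2})={\rm EO}(K_{2}\cdot K_{1})={\rm EO}(K_{2})\cdot{\rm EO}(K_{1})$, and applying the first part to ${\rm EO}(K_{1})$ and ${\rm EO}(K_{2})$ gives the second identity.

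The main obstacle is making the degree-two bookkeeping rigorous in the non-commutative ring. Specifically, I must check that moving $t$ past $s^{\pm1}$ only produces terms of degree at least two, so that in a degree-one times degree-one product these corrections land in $F^{3}$. I also need the uniqueness statement of Proposition~\ref{prop_base}, which justifies comparing coefficients of $ut$ and $tu$ separately.
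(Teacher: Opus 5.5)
Your proposal is correct and follows the paper's proof essentially step for step. Both arguments expand $F_{K_2}F_{K_1}=F_{K_1}F_{K_2}$ modulo $F^{3}$, compare the $ut$-coefficients to get $f_{K_1,u}f_{K_2,t}=f_{K_2,u}f_{K_1,t}$, substitute Proposition~\ref{deg1}, and obtain the ${\rm EO}$ statement from multiplicativity of ${\rm EO}$. Your explicit check that $t\,p(s)\,u\equiv p(s)\,tu \pmod{F^{3}}$ supplies a step the paper uses without comment.
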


\begin{proof}
We work on $X/F^{3}$. 
Represent $F_{K_{i}}(s,t,u)$ as $F_{K_{i}}(s,t,u)=1+f_{K_{i},u}(s)u+f_{K_{i},t}(s)t+f_{K_{i},u^{2}}(s)u^{2}+f_{K_{i},ut}(s)ut+f_{K_{i},tu}(s)tu+f_{K_{i},t^{2}}(s)t^{2}$ for $i=1,2$. 
Then by 
\begin{align}
  &F_{K_{i}\cdot K_{j}}(s,t,u) \notag \\
=&F_{K_{j}}(s,t,u)\cdot F_{K_{i}}(s,t,u) \notag \\
=&\left( 1+f_{K_{j},u}(s)u+f_{K_{j},t}(s)t+f_{K_{j},u^{2}}(s)u^{2}+f_{K_{j},ut}(s)ut+f_{K_{j},tu}(s)tu+f_{K_{j},t^{2}}(s)t^{2} \right) \notag \\
& \hspace{0.5cm} \cdot \left( 1+f_{K_{i},u}(s)u+f_{K_{i},t}(s)t+f_{K_{i},u^{2}}(s)u^{2}+f_{K_{i},ut}(s)ut+f_{K_{i},tu}(s)tu+f_{K_{i},t^{2}}(s)t^{2} \right)  \notag \\
=&1+\left( f_{K_{i},u}(s)+f_{K_{j},u}(s) \right)u+\left( f_{K_{i},t}(s)+f_{K_{j},t}(s) \right)t \notag \\
& \hspace{0.5cm}+\left(f_{K_{i},u}(s)f_{K_{j},u}(s)+ f_{K_{i},u^{2}}(s)+f_{K_{j},u^{2}}(s) \right)u^{2}    \notag \\
& \hspace{0.5cm}+\left(f_{K_{i},t}(s)f_{K_{j},u}(s)+ f_{K_{i},ut}(s)+f_{K_{j},ut}(s) \right)ut  \notag  \\  
& \hspace{0.5cm}+\left(f_{K_{i},u}(s)f_{K_{j},t}(s)+ f_{K_{i},tu}(s)+f_{K_{j},tu}(s) \right)tu  \notag  \\
& \hspace{0.5cm}+\left(f_{K_{i},t}(s)f_{K_{j},t}(s)+ f_{K_{i},t^{2}}(s)+f_{K_{j},t^{2}}(s) \right)t^{2},  \notag
\end{align}
we see that $f_{K_{1},u}(s)f_{K_{2},t}(s)=f_{K_{2},u}(s)f_{K_{1},t}(s)$ must hold. 
By Proposition~\ref{deg1}, this equation becomes 
\begin{align}
&f_{K_{1},u}(s)f_{K_{2},t}(s)=f_{K_{2},u}(s)f_{K_{1},t}(s) \notag \\
\Longleftrightarrow &f_{K_{1},u}(s)\left(f_{K_{2},u}(s)-f_{K_{2},u}(s^{-1})\right)=f_{K_{2},u}(s)\left(f_{K_{1},u}(s)-f_{K_{1},u}(s^{-1})\right) \notag \\
\Longleftrightarrow &f_{K_{1},u}(s)f_{K_{2},u}(s^{-1})=f_{K_{2},u}(s)f_{K_{1},u}(s^{-1})  \notag \\
\Longleftrightarrow &W_{K_{1}}(s^{-1})\cdot W_{K_{2}}(s)=W_{K_{1}}(s)\cdot W_{K_{2}}(s^{-1}). \notag
\end{align}

Since the projection ${\rm EO}(\cdot)$ preserves the products, two long virtual knots ${\rm EO}(K_{1})$ and ${\rm EO}(K_{2})$ commute. 
Thus $W_{{\rm EO} (K_{1})}(s^{-1})\cdot W_{{\rm EO} (K_{2})}(s)=W_{{\rm EO}(K_{1})}(s)\cdot W_{{\rm EO}(K_{2})}(s^{-1})$ also holds. 
\end{proof}

\begin{rmk}
In \cite{silver}, Silver and Williams reproved the non-commutativity of two long virtual knots $K_{2}(3)^{*}$ and $K_{2}(3)^{\#*}$ which was firstly proved by Manturov, by using {\it the extended Alexander group systems}, which can be regarded as a non-commutativization of the Alexander biquandle in some sense. 
Though our biquandle $X$ also can be regarded as a non-commutativization as stated in Remark~\ref{substitute}, our invariant $F_{K}(s,u,t)$ cannot detect non-commutativity of $K_{2}(3)^{*}$ and $K_{2}(3)^{\#*}$. 
Since the closures of $K_{2}(3)^{*}$ and $K_{2}(3)^{\#*}$ are trivial virtual knots, circles with no classical and virtual crossings, the values of these under $F_{K}(s,u,t)$ are $1$'s by Proposition~\ref{base change}. 
And we see that ${\rm EO}\left(K_{2}(3)^{*}\right)={\rm EO}\left(K_{2}(3)^{\#*}\right)=K_{2}(1)^{*}$. 
Thus we have $F_{K_{2}(3)^{*}\cdot K_{2}(3)^{\#*}}(s,u,t)=F_{K_{2}(3)^{\#*}\cdot K_{2}(3)^{*}}(s,u,t)=1$ and $F_{{\rm EO}\left(K_{2}(3)^{*}\right)\cdot{\rm EO}\left(K_{2}(3)^{\#*}\right)}=F_{{\rm EO}\left(K_{2}(3)^{\#*}\right)\cdot{\rm EO}\left(K_{2}(3)^{*}\right)}=\left(F_{K_{2}(1)}(s,u,t)\right)^{2}$. 
\end{rmk}

\begin{cor}\label{center}
Suppose that a long virtual knot $K$ commutes with all long virtual knots. 
Then $W_{K}(s)$ and $W_{{\rm EO}(K)}(s)$ vanish. 
\end{cor}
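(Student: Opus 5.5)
We apply Theorem~\ref{commute} with $K_1=K$ and $K_2$ a suitably chosen test knot $L$ for which $W_L(s)$ and $W_{{\rm EO}(L)}(s)$ are known explicitly and are not symmetric under $s\leftrightarrow s^{-1}$. Since $K$ commutes with every long virtual knot, it commutes with $L$. Theorem~\ref{commute} then gives
\begin{align}
W_{K}(s)\,W_{L}(s^{-1})=W_{K}(s^{-1})\,W_{L}(s) \notag
\end{align}
in $\mathbb{Z}[s^{\pm1}]$, together with the analogous identity for ${\rm EO}(K)$ and ${\rm EO}(L)$.

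A single $L$ only forces $W_K(s)/W_K(s^{-1})=W_L(s)/W_L(s^{-1})$. The plan is therefore to use two test knots $L_1,L_2$ whose ratios $W_{L_i}(s)/W_{L_i}(s^{-1})$ differ as rational functions.

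\emph{Step 1: from two test knots to vanishing.} Suppose $W_K\neq 0$. Note that $W_K(s^{-1})\neq0$ as well. Then from the two identities,
\begin{align}
\frac{W_{L_1}(s)}{W_{L_1}(s^{-1})}=\frac{W_K(s)}{W_K(s^{-1})}=\frac{W_{L_2}(s)}{W_{L_2}(s^{-1})}, \notag
\end{align}
which gives $W_{L_1}(s)W_{L_2}(s^{-1})=W_{L_1}(s^{-1})W_{L_2}(s)$. Hence it suffices to exhibit $L_1,L_2$ with $W_{L_i}\neq 0$ violating this last equation. Since $\mathbb{Z}[s^{\pm1}]$ is a domain, the division argument is legitimate.

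\emph{Step 2: choosing the test knots.} The natural candidates are $L_1=K_2(1)$ and $L_2=K_3(4)$, or their mirrors. The paper computes $W_{K_3(4)}(s)=s-1-s^{-1}+s^{-2}$. For a two-crossing knot such as $K_2(1)$, I expect $W$ to have the form $\pm(s^{\pm1}-1)$ up to the conventions of Fact~\ref{writhe polynomial}. Alternatively, a simpler choice is $L_1=L$ and $L_2=L^{*}$ or $L^{\#}$, since mirrors change $W_L(s)$ by $W_L(s)\mapsto \pm W_L(s^{-1})$. In that case the cross-identity becomes $W_L(s)^2=W_L(s^{-1})^2$ (up to sign), which fails whenever $W_L(s)\neq\pm W_L(s^{-1})$. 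For instance, with $W_L(s)=s-1$ we get $(s-1)^2\neq(s^{-1}-1)^2$.

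\emph{Step 3: the EO part.} For ${\rm EO}(K)$, we use that ${\rm EO}(K)$ commutes with ${\rm EO}(L)$ for every $L$, via the second bullet of Theorem~\ref{commute}. Since ${\rm EO}(L)=L$ for knots $L\in\mathcal{EO}$, we may take test knots that already have diagrams without EU crossings, such as $K_2(1)$ and its horizontal mirror. Lemma~\ref{eo_writhe} then lets us compute $W_{{\rm EO}(L)}=W_L$ directly. The same two-test-knot argument gives $W_{{\rm EO}(K)}=0$.

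The main obstacle is Step 2: reliably computing $W$ for concrete small knots with the correct signs and indices, and checking how the siebling operations act on $W$. If the mirror trick is ambiguous, I would fall back on directly computing $W_{K_2(1)}$ from its pointed Gauss diagram (Figure~\ref{k2(1)}) and checking that it and $W_{K_3(4)}$ violate the cross-identity. For example, $W_{K_3(4)}(s)/W_{K_3(4)}(s^{-1})$ is plainly not equal to $(s-1)/(s^{-1}-1)=-s$.
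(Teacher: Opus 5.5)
Your Step~1 reduction is sound, and it is in substance the paper's argument. The paper applies Theorem~\ref{commute} first to $K$ and $K_{2}(1)$, whose writhe polynomial is reciprocal, to force $W_{K}$ and $W_{{\rm EO}(K)}$ to be reciprocal. It then applies it to $K$ and $K_{3}(4)$, whose writhe polynomial is not reciprocal, to force vanishing. In your language this is the pair $L_{1}=K_{2}(1)$, $L_{2}=K_{3}(4)$. Their ratios $W_{L_i}(s)/W_{L_i}(s^{-1})$ are $1$ and $s^{-1}$ respectively; one checks $W_{K_{3}(4)}(s)=s^{-1}W_{K_{3}(4)}(s^{-1})$. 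Both knots lie in $\mathcal{EO}$, so the same pair also handles the second bullet.

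The gap is in the instantiation, which is the only real content once Step~1 is in place. Your guess $W_{K_{2}(1)}(s)=\pm(s^{\pm1}-1)$ is wrong; the paper computes $W_{K_{2}(1)}(s)=s-2+s^{-1}$. You can read this off the equations in Example~\ref{example_computation}: the Gauss word is over, over, under, under, with two positive EO crossings of indices $+1$ and $-1$. In fact no writhe polynomial can equal $\pm(s^{\pm1}-1)$, since writhe polynomials always satisfy $W_K'(1)=0$.

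Because $W_{K_{2}(1)}$ is reciprocal, your explicit choice for the EO part fails. You proposed $K_{2}(1)$ together with its horizontal mirror, but $W_{K_{2}(1)^{*}}(s)=-W_{K_{2}(1)}(s^{-1})=-(s-2+s^{-1})$. Both ratios are then $1$, the cross-identity holds, and nothing is forced. The same goes for your mirror trick with any reciprocal $L$. Your closing comparison with $-s$ also rests on the wrong value. As written, the EO half of the corollary is therefore not established, and the first half is verified only against an incorrect computation.

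There are two ways to repair it:
\begin{itemize}
\item Use $K_{2}(1)$ and $K_{3}(4)$, as the paper does.
\item Run your mirror trick with $L=K_{3}(4)$. This works because $W_{K_{3}(4)}(s)\neq\pm W_{K_{3}(4)}(s^{-1})$: the constant terms rule out the minus sign, and the $s^{-2}$ coefficients rule out equality.
\end{itemize}
For the EO bullet the mirror must be the horizontal one. $K_{3}(4)^{*}$ stays in $\mathcal{EO}$, because planar reflection keeps every arrow's direction. By contrast, ${\rm EO}(L^{\#})=L$ for $L\in\mathcal{EO}$, so $L^{\#}$ adds nothing there.
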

\begin{proof}
First, consider the long virtual knot $K_{2}(1)$. 
Note that $K_{2}(1)\in \mathcal{EO}$. 
We see that $W_{K_{2}(1)}(s)=W_{{\rm EO}( K_{2}(1))}(s)=s-2+s^{-1}$ holds by computation. 
Note that $W_{K_{2}(1)}(s)$ is reciprocal i.e. $W_{K_{2}(1)}(s)=W_{K_{2}(1)}(s^{-1})$ holds. 
By applying Theorem~\ref{commute} to $K$ and $K_{2}(1)$, we see that $W_{K}(s)$ and $W_{{\rm EO}( K)}(s)$ are reciprocal. 
Thus the conculusion of Theorem~\ref{commute} becomes 
\begin{itemize}
\item $W_{K}(s)\cdot \left( W_{K'}(s)-W_{K'}(s^{-1})\right)=0$
\item $W_{{\rm EO}(K)}(s)\cdot \left( W_{{\rm EO}(K')}(s)-W_{{\rm EO}(K')}(s^{-1})\right)=0$
\end{itemize}
for every long virtual knots $K'$. 

Next, consider the long virtual knot $K_{3}(4)$. 
Note that $K_{3}(4)\in \mathcal{EO}$. 
We see that $W_{K_{3}(4)}(s)=W_{{\rm EO}( K_{3}(4))}(s)=s-1-s^{-1}+s^{-2}$ holds by computation. 
Note that $W_{K_{3}(4)}(s)$ is not reciprocal. 
Thus $W_{K_{3}(4)}(s)-W_{K_{3}(4)}(s^{-1})\neq 0$ holds. 
Hence we can conclude that $W_{K}(s)=0$ and $W_{{\rm EO}(K)}(s)=0$. 
\end{proof}

Consider the long virtual knot $K$ which admits a long virtual knot diagram with at most three classical crossings. 
If $K$ admits the condition of Corollary~\ref{center}, then $K$ is one of $K_{0}(1)$, $K_{3}(13)$ and $K_{3}(13)^{\#}$, all of which are classical. 
It is known that every classical long virtual knot commutes with all long virtual knots. 
Ofcourse, this can be proved by the result of Chrisman \cite{chrisman} if we know that long virtual knots whose crossing numbers are at most three are prime. 
However, the author does not know whether these are prime or not.

\vspace{0.5cm}

\ \ E-mail adress: \texttt{nnsekino@gmail.com}

\begin{figure}[htbp]
 \begin{center}
  \includegraphics[width=120mm]{computation_symmetry1.eps}
 \end{center}
 \caption{Long virtual knots 1/10}
 \label{computation_symmetry1}
\end{figure}

\begin{figure}[htbp]
 \begin{center}
  \includegraphics[width=120mm]{computation_symmetry2.eps}
 \end{center}
 \caption{Long virtual knots 2/10}
 \label{computation_symmetry2}
\end{figure}

\begin{figure}[htbp]
 \begin{center}
  \includegraphics[width=120mm]{computation_symmetry3.eps}
 \end{center}
 \caption{Long virtual knots 3/10}
 \label{computation_symmetry3}
\end{figure}

\begin{figure}[htbp]
 \begin{center}
  \includegraphics[width=120mm]{computation_symmetry4.eps}
 \end{center}
 \caption{Long virtual knots 4/10}
 \label{computation_symmetry4}
\end{figure}

\begin{figure}[htbp]
 \begin{center}
  \includegraphics[width=120mm]{computation_symmetry5.eps}
 \end{center}
 \caption{Long virtual knots 5/10}
 \label{computation_symmetry5}
\end{figure}

\begin{figure}[htbp]
 \begin{center}
  \includegraphics[width=120mm]{computation_symmetry6.eps}
 \end{center}
 \caption{Long virtual knots 6/10}
 \label{computation_symmetry6}
\end{figure}

\begin{figure}[htbp]
 \begin{center}
  \includegraphics[width=120mm]{computation_symmetry7.eps}
 \end{center}
 \caption{Long virtual knots 7/10}
 \label{computation_symmetry7}
\end{figure}

\begin{figure}[htbp]
 \begin{center}
  \includegraphics[width=120mm]{computation_symmetry8.eps}
 \end{center}
 \caption{Long virtual knots 8/10}
 \label{computation_symmetry8}
\end{figure}

\begin{figure}[htbp]
 \begin{center}
  \includegraphics[width=120mm]{computation_symmetry9.eps}
 \end{center}
 \caption{Long virtual knots 9/10}
 \label{computation_symmetry9}
\end{figure}

\begin{figure}[htbp]
 \begin{center}
  \includegraphics[width=120mm]{computation_symmetry10.eps}
 \end{center}
 \caption{Long virtual knots 10/10}
 \label{computation_symmetry10}
\end{figure}

\end{document}